\documentclass[11pt,reqno]{amsart}%
\usepackage{graphicx}
\usepackage{amsmath}
\usepackage{amsfonts}
\usepackage{amssymb}
\usepackage{amsthm}
\usepackage{enumerate}
\usepackage{color}
\usepackage{epstopdf}
\usepackage{amsfonts}
\usepackage{algorithmic}
\usepackage{boxedminipage}

\usepackage{cite}
\usepackage{url}
\usepackage[margin=1in]{geometry}%
\usepackage{tikz-cd}
\setcounter{MaxMatrixCols}{30}

\usepackage{graphicx}
\usepackage{acronym}
\providecommand{\U}[1]{\protect\rule{.1in}{.1in}}

\newtheorem{theorem}{Theorem}[section]
\newtheorem{proposition}[theorem]{Proposition}
\newtheorem{lemma}[theorem]{Lemma}

\newtheorem{definition}[theorem]{Definition}

\theoremstyle{remark}

\let\O\undefined

\DeclareMathOperator{\O}{O}

\begin{document}
\title[nondegeneracy of tensor eigenvectors]{Nondegeneracy of eigenvectors and singular vector tuples of tensors}

\author{Shenglong Hu}
\address{Department of Mathematics, School of Science, Hangzhou Dianzi University, Hangzhou, 310018, China}
\email{shenglonghu@hdu.edu.cn}

\begin{abstract}
In this article, nondegeneracy of singular vector tuples, Z-eigenvectors and eigenvectors of tensors is studied. {They have found many applications in diverse areas.} The main results {are:} (i) each (Z-)eigenvector/singular vector tuple of a generic tensor is nondegenerate, and (ii) each nonzero Z-eigenvector/singular vector tuple of an orthogonally decomposable tensor is nondegenerate.
\end{abstract}
\keywords{Tensor, singular vector tuple, eigenvector, Z-eigenvector, nondegenerate, generic}
\subjclass[2010]{15A18; 15A69; 65F18}
\maketitle

\section{Introduction}\label{sec:intro}
Tensors, as higher order {generalizations} of vectors and matrices, are inevitable in engineering, scientific computing as well as mathematics \cite{H-12,L-12}.
Eigenvectors and singular vector tuples of tensors have {found important applications in diverse areas} in a wide spread range since the independent seminal works of Lim \cite{L-05} and Qi \cite{Q-05}. When a numerical algorithm is designed for computing an eigenvector or a singular vector tuple, the convergence rate analysis of this algorithm {typically} involves the second order information of the eigenvector or the singular vector tuple. However, in the literature, there exists few discussions on this important topic \cite{Q-05,L-05,HHLQ-13}. As a result, {restrictive} hypotheses on eigenvectors or singular vector tuples are employed to achieve linear, superlinear or quadratic convergence of certain algorithms for computing them.

Recently, it {was} shown that all the singular vector tuples of a generic tensor are nondegenerate \cite{HL-18}, with which it is further shown that the higher order power method for computing a best rank one approximation of a given tensor converges $R$-linearly without any further assumption for a generic tensor.
{The ingredient on showing the linear convergence there is based on the fact that the quadratic approximation of the objective function over the feasible set is nonsingular locally, or more directly the \L ojasiewicz exponent of the objective function is $\frac{1}{2}$ at the critical point \cite{LP-16}.}
Therefore, this result can also be employed to any optimization methods and the corresponding local convergence rates can be established without any further assumption. It is also shown in \cite{HL-18} that any nonzero singular vector tuple of an orthogonally decomposable tensor is nondegenerate. Thus, the conclusions on convergence rate as above hold for orthogonally decomposable tensors without the generic assumption.

It sheds {light on} that the second order information analysis of eigenvectors and singular vector tuples of tensors will be important. Thus, in this article, we will first summarize the results for singular vector tuples established in \cite{HL-18}, since for which the article is presented along the clue for the linear convergence of the higher order power method. Then, we will continue the discussions to Z-eigenvectors of symmetric tensors and more general tensors, and eigenvectors of general tensors. The main derived results can be concisely summarized as
\begin{equation*}
\begin{array}{c}
\textbf{Each (Z-)eigenvector or singular vector tuple of a generic tensor is nondegenerate.}
\end{array}
\end{equation*}
Moreover, we also show that every nonzero Z-eigenvector of a symmetric orthogonally decomposable tensor is nondegenerate.

The rest of this article is organized as follows. Some preliminaries are presented in Section~\ref{sec:notation}. Results on singular vector tuples are summarized in Section~\ref{sec:singular}. The new {contributions start} from Section~\ref{sec:z-eigenvector}, which is for Z-eigenvectors of {real} symmetric tensors and general {(complex)} tensors.  Section~\ref{sec:symmetry} is for symmetric tensors, Section~\ref{sec:orthogonal} is for symmetric orthogonally decomposable tensors, and Section~\ref{sec:z-eigenvector-nsym} is for the general case from an algebraic perspective. In particular, ``generic versions" of results in Section~\ref{sec:singular} and Section~\ref{sec:symmetry} are established. This is in the Zariski topology which is weaker than the usual Euclidean topology as that in \cite{HL-18}, which gives an ``almost all version". The eigenvectors of tensors are discussed in Section~\ref{sec:eigenvector}. Some final remarks are given in Section~\ref{sec:final}.

\section{Preliminaries}\label{sec:notation}
Let $n$ and $k\geq 3$ be positive integers and $\mathbb R$ ({resp. }$\mathbb C$) be the field of real ({resp. }complex) numbers.
Throughout this paper, $\|\cdot\|$ is reserved for the Euclidean norm of a vector.
Let $\mathbb S^{n-1}\subset\mathbb R^n$ {be the unit} sphere in $\mathbb R^n$. Given positive integers $k\geq 3$ and $n_1,\dots,n_k$, $\mathbb R^{n_1}\otimes\dots\otimes\mathbb R^{n_k}$ is the space of $k$th order tensors with dimension $n_1\times\dots\times n_k$. Let $\otimes^k\mathbb R^n:=\mathbb R^n\otimes\dots\otimes\mathbb R^n$ ($k$ copies) be the space of $k$th order tensors of dimension $n\times\dots\times n$ with entries in $\mathbb R$. $\otimes^k\mathbb C^n$ is defined similarly.

Given a vector $\mathbf x\in\mathbb C^n$ with entries $x_i$'s, $\mathbf x^{\otimes k}$ represents the \textit{decomposable tensor} defined by $\mathbf x$, which is a symmetric tensor with entries being
\[
(\mathbf x^{\otimes k})_{i_1\dots i_k}=x_{i_1}\cdots x_{i_k}\ \text{for all }i_j\in\{1,\dots,n\}\ \text{and }j\in\{1,\dots,k\}.
\]
The mapping $\mathbf x\mapsto\mathbf x^{\otimes k}$ is well-known as \textit{Veronese mapping} or \textit{Veronese embedding} \cite{H-77}.
Given a tensor $\mathcal A\in\otimes^k\mathbb C^n$ with entries $a_{i_1\dots i_k}$'s and a vector $\mathbf x\in\mathbb C^n$, $\mathcal A\mathbf x^{k-2}$ is defined as a matrix in $\mathbb C^{n\times n}$ with its $(i,j)$th component being $\sum_{i_3,\dots,i_k=1}^na_{iji_3\dots i_k}x_{i_3}\cdots x_{i_k}$ for all $i,j\in\{1,\dots,n\}$. Likewise, $\mathcal A\mathbf x^{k-1}$ is defined as a vector in $\mathbb C^n$ via $(\mathcal A\mathbf x^{k-2})\mathbf x$.

Given a \textit{block vector}
\[
\mathbf x:=(\mathbf x_1,\dots,\mathbf x_k)\in\mathbb R^{n_1}\times\dots\times\mathbb R^{n_k}\simeq\mathbb R^{n_1+\dots+n_k}\ \text{with }\mathbf x_i\in\mathbb R^{n_i}\ \text{for all }i=1,\dots,k,
\]
we define a mapping $\tau : \mathbb R^{n_1}\times\dots\times\mathbb R^{n_k}\rightarrow\mathbb R^{n_1}\otimes\dots\otimes\mathbb R^{n_k}$ as the {decomposable tensor} with order $k$ defined by $\{\mathbf x_1,\dots,\mathbf x_k\}$, that is,
\begin{equation}\label{eq:segre}
\tau(\mathbf x)=\mathbf x_1\otimes\dots\otimes\mathbf x_k.
\end{equation}
This mapping is well-known as \textit{Segre mapping} or \textit{Segre embedding} \cite{H-77}.
Given two tensors $\mathcal A,\mathcal B\in \mathbb R^{n_1}\otimes\dots\otimes\mathbb R^{n_k}$ with order $k$ and the entries being indexed as $a_{i_1\dots i_k}$ and $b_{i_1\dots i_k}$ respectively, the  inner product is defined as
\[
\langle\mathcal A,\mathcal B\rangle:=\sum_{i_1=1}^{n_1}\dots\sum_{i_k=1}^{n_k}a_{i_1\dots i_k}b_{i_1\dots i_k},
\]
with the corresponding induced norm given by
$\|\mathcal A\|_{\operatorname{HS}}:=\sqrt{\langle\mathcal A,\mathcal A\rangle}$.
This norm is a generalization of the matrix Frobenius norm and is termed as the \textit{Hilbert-Schmidt norm}.

{If $f : M\rightarrow \mathbb R$ is a smooth function over a smooth manifold $M$, a \textit{critical point} of $f$ on $M$ is a point $\mathbf x\in M$ such that the Riemannian gradient of $f$ at $\mathbf x$ vanishes and a \textit{nondegenerate critical point} is a critical point $\mathbf x$ of $f$ such that the Riemannian Hessian of $f$ at $\mathbf x$ is a nonsingular linear mapping from the tangent space of $M$ at $\mathbf x$ to itself \cite{BT-82}.}

{Let $\mathbb F$ be a field. In this article, it can be $\mathbb R$, the field of real numbers, or $\mathbb C$, the field of complex numbers; the exact meaning will be clear from the context. We say a property is ``generic" in a space $\mathbb F^n$, if there exists a proper closed subset $X\subset\mathbb F^n$ in the Zariski topology such that this property holds outside $X$ \cite{H-77}. Note that $\mathbb R^n$, as a subset of $\mathbb C^n$, is dense in the Zariski topology.}
\section{Singular Vector Tuples}\label{sec:singular}
{  We first give the definitions of singular vector tuples. 
\begin{definition}\label{def:singular}
Given a tensor $\mathcal A\in\mathbb R^{n_1}\otimes\dots\otimes\mathbb R^{n_k}$, a vector tuple $\mathbf x=(\mathbf x_1,\dots,\mathbf x_k)\in \mathbb S:=\mathbb S^{n_1-1}\times\dots\times\mathbb S^{n_k-1}$ is called a (real) \textit{singular vector tuple} of $\mathcal A$ if it is a critical point of the smooth function $G(\mathbf x):=\langle\mathcal A,\tau(\mathbf x)\rangle$ on the joint sphere $\mathbb S$. The value of $G$ at a singular vector tuple is called a \textit{singular value}. The corresponding vectors $\{\mathbf x_1,\dots,\mathbf x_k\}$ are called \textit{singular vectors}.
\end{definition}
The definitions of singular values/vectors were proposed by Lim \cite{L-05}.
It is easy to see from the definition that
\[
\sigma=\langle\mathcal A,\tau(\mathbf x)\rangle
\]
for a singular vector tuple $\mathbf x$ and the corresponding singular value $\sigma$. If $\sigma\neq 0$, then we call $\mathbf x$ a \textit{nonzero singular vector tuple}. For a generic tensor (e.g., tensors with nonzero hyperdeterminant), all its singular vector tuples are nonzero \cite{GKZ-94}.

\begin{definition}[Nondegenerate Singular Vector Tuples]\label{def:nongenerate}
We say a singular vector tuple $\mathbf x$ of $\mathcal A$ is \textit{nondegenerate} if $\mathbf x$ is a nondegenerate critical point of $G$ on $\mathbb S$.
\end{definition}

A main result in \cite{HL-18} is the following theorem (cf.\ \cite[Theorem~5.3]{HL-18}).
\begin{theorem}[Almost All {Nondegeneracy}]\label{thm:critical}
For almost all tensors in $\mathbb R^{n_1}\otimes\dots\otimes\mathbb R^{n_k}$, each of its singular vector tuples is nondegenerate.
\end{theorem}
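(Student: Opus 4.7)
The plan is to realize all singular vector tuples of all tensors simultaneously as an incidence manifold $C$ inside $(\mathbb R^{n_1}\otimes\dots\otimes\mathbb R^{n_k})\times\mathbb S$ and then to apply Sard's theorem to the projection $\pi:C\to \mathbb R^{n_1}\otimes\dots\otimes\mathbb R^{n_k}$. Concretely, I would introduce the smooth map
\[
\Phi(\mathcal A,\mathbf x)\;:=\;\operatorname{grad}_{\mathbb S} G_{\mathcal A}(\mathbf x),\qquad G_{\mathcal A}(\mathbf x)=\langle\mathcal A,\tau(\mathbf x)\rangle,
\]
viewed as a section of the pullback of $T\mathbb S$ to $(\mathbb R^{n_1}\otimes\dots\otimes\mathbb R^{n_k})\times\mathbb S$, so that $C:=\Phi^{-1}(0)$ is exactly the set of pairs $(\mathcal A,\mathbf x)$ for which $\mathbf x$ is a singular vector tuple of $\mathcal A$ in the sense of Definition~\ref{def:singular}.

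First I would show that $\Phi$ is transverse to the zero section, so that $C$ is a smooth submanifold of dimension equal to $\dim(\mathbb R^{n_1}\otimes\dots\otimes\mathbb R^{n_k})$. For this it suffices to vary only $\mathcal A$: given $\mathbf v=(\mathbf v_1,\dots,\mathbf v_k)\in T_{\mathbf x_0}\mathbb S$, the rank-one perturbation
\[
\delta\mathcal A\;:=\;\sum_{i=1}^k \mathbf x_1\otimes\cdots\otimes\mathbf x_{i-1}\otimes\mathbf v_i\otimes\mathbf x_{i+1}\otimes\cdots\otimes\mathbf x_k
\]
produces an Euclidean gradient of $\langle\delta\mathcal A,\tau(\cdot)\rangle$ at $\mathbf x_0$ equal to $\mathbf v$, because $\|\mathbf x_j\|=1$ and $\mathbf v_i\perp\mathbf x_i$ kill all cross terms; this vector already lies in $T_{\mathbf x_0}\mathbb S$, so the Riemannian projection is the identity on it. Hence the partial differential $D_{\mathcal A}\Phi(\mathcal A_0,\mathbf x_0)$ is surjective onto $T_{\mathbf x_0}\mathbb S$, and $D\Phi$ is surjective a fortiori.

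Second, I would apply Sard's theorem to the smooth map $\pi:C\to \mathbb R^{n_1}\otimes\dots\otimes\mathbb R^{n_k}$, which is between manifolds of the same dimension: the set of critical values of $\pi$ then has Lebesgue measure zero. A tangent vector $(0,\mathbf v)\in T_{(\mathcal A,\mathbf x)}C$ lying in the kernel of $d\pi$ must satisfy $D_{\mathbf x}\Phi(\mathcal A,\mathbf x)(\mathbf v)=0$, and $D_{\mathbf x}\Phi$ is by definition the Riemannian Hessian of $G_{\mathcal A}$ at $\mathbf x$. Consequently, $\mathcal A$ being a regular value of $\pi$ is equivalent to the Riemannian Hessian of $G_{\mathcal A}$ being nonsingular at every singular vector tuple, i.e., every such tuple is nondegenerate in the sense of Definition~\ref{def:nongenerate}. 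This completes the proof for almost all tensors.

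The main obstacle is the first step, namely the transversality of $\Phi$ to the zero section: although it reduces to the concrete rank-one perturbation displayed above, one has to verify carefully that the contribution to the Euclidean gradient from each summand of $\delta\mathcal A$ sits in the correct tangent factor $T_{\mathbf x_i}\mathbb S^{n_i-1}$ without leaking any radial component, so that the Riemannian projection neither enlarges nor collapses the image. Once this is in place, the dimension count for $C$ and the identification of regular values of $\pi$ with tensors whose singular vector tuples are all nondegenerate proceed routinely from Sard's theorem.
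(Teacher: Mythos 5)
Your argument is correct: the incidence-manifold/parametric-transversality scheme works, the rank-one perturbation $\delta\mathcal A=\sum_i \mathbf x_1\otimes\cdots\otimes\mathbf v_i\otimes\cdots\otimes\mathbf x_k$ does hit every tangent vector of $\mathbb S$ (the cross terms vanish because $\|\mathbf x_j\|=1$ and $\mathbf v_i\perp\mathbf x_i$, and the resulting Euclidean gradient is already tangential), so $C=\Phi^{-1}(0)$ is a manifold of the same dimension as the tensor space, and Sard applied to $\pi$ identifies regular values with tensors all of whose singular vector tuples have nonsingular Riemannian Hessian (note the vertical derivative $D_{\mathbf x}\Phi$ equals the Riemannian Hessian precisely at zeros of the gradient field, which is exactly where you use it). This is, however, a different route from the one the paper follows: the paper does not reprove Theorem~\ref{thm:critical} but quotes it from \cite{HL-18}, and the proof technique it exhibits (for the analogous symmetric statement, Theorem~\ref{thm:z-critical}) is to view $G$ as a linear functional composed with the Segre (resp.\ Veronese) embedding restricted to the (product of) sphere(s), show that this embedding is a surjective local diffeomorphism onto its image $M$, invoke the classical fact (Lemma~\ref{lem:generic-morse}, Bott--Tu) that almost every linear functional restricts to a Morse function on a submanifold, and transfer nondegenerate critical points back through the local diffeomorphism via Proposition~\ref{prop:critical}. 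Your approach is self-contained (Sard/Thom transversality replaces the quoted Morse-function lemma, whose proof is itself a Sard argument) and makes the genericity mechanism explicit through the tangent-space perturbations of the Segre variety; the paper's approach buys a uniform template that it then reuses verbatim for the Veronese/Z-eigenvector case, at the price of having to establish the local-diffeomorphism property of the embedding and the transfer proposition. Both yield the same ``almost all'' (Lebesgue measure) conclusion; neither by itself gives the Zariski-generic sharpening, which the paper obtains separately via Lemma~\ref{lem:polynomial}.
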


Orthogonally decomposable tensors form a very important subclass of tensors \cite{F-92,K-01,CJ-10}.
The following result can be found in \cite[Proposition~6.5]{HL-18}.
\begin{theorem}[Orthogonally Decomposable Tensors]\label{thm:orthogonal}
If a tensor $\mathcal A\in\mathbb R^{n_1}\otimes\dots\otimes\mathbb R^{n_k}$ is orthogonally decomposable, i.e.,
\begin{equation}\label{eq:orthogonal}
\mathcal A=\sum_{i=1}^r\lambda_i\mathbf u^{(1)}_i\otimes\dots\otimes\mathbf u^{(k)}_i
\end{equation}
for some orthonormal matrices $[\mathbf u^{(j)}_1\ \cdots\ \mathbf u^{(j)}_r]\in\mathbb R^{n_j\times r}$ for all $j\in\{1,\dots,k\}$, then each of its nonzero singular vector tuples is nondegenerate.
\end{theorem}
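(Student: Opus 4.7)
The plan is to compute the Riemannian Hessian of $G$ at a nonzero singular vector tuple $\mathbf x$ directly in coordinates adapted to the orthogonal decomposition, so that its nonsingularity reduces to an explicit eigenvalue calculation. For each $j$, extend $\mathbf u_1^{(j)}, \ldots, \mathbf u_r^{(j)}$ to an orthonormal basis of $\mathbb R^{n_j}$ and let $V_j := \operatorname{span}\{\mathbf u_i^{(j)}\}_{i=1}^r$ and $t_{j, i} := \langle \mathbf u_i^{(j)}, \mathbf x_j\rangle$, so that $G(\mathbf x) = \sum_{i=1}^r \lambda_i \prod_{j=1}^k t_{j, i}$. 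The critical-point equations read $\partial G/\partial \mathbf x_j = \sigma \mathbf x_j$ (the Lagrange multiplier is $\sigma$ by pairing with $\mathbf x_j$), and since $\partial G/\partial \mathbf x_j \in V_j$ we obtain $\mathbf x_j \in V_j$ as soon as $\sigma \ne 0$. In coordinates this becomes $\lambda_i \prod_{j' \ne j} t_{j', i} = \sigma t_{j, i}$; multiplying by $t_{j, i}$ shows $\sigma t_{j, i}^2$ is independent of $j$, hence $|t_{j, i}| = c_i$ is a $j$-independent constant. Writing $t_{j, i} = \epsilon_{j, i} c_i$ with $\epsilon_{j, i} \in \{\pm 1\}$ when $c_i > 0$, the equation collapses to the scalar identity $\lambda_i (\prod_j \epsilon_{j, i}) c_i^{k-2} = \sigma$ for each $i$ with $c_i > 0$.

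Next, I would compute the Hessian of the Lagrangian $L := G - \sum_j \tfrac{\sigma}{2}(\|\mathbf x_j\|^2 - 1)$. Multilinearity of $G$ annihilates all same-block second derivatives, so the diagonal blocks of the Hessian are $-\sigma I$, while the cross block $\partial^2 G/\partial \mathbf x_j \partial \mathbf x_{j'}$ for $j \ne j'$ equals $\sum_i \lambda_i (\prod_{j'' \ne j, j'} t_{j'', i}) \mathbf u_i^{(j)} \otimes \mathbf u_i^{(j')} \in V_j \otimes V_{j'}$. Consequently the directions in $V_j^\perp$ decouple from everything, and the Hessian restricted to $\bigoplus_j V_j^\perp$ is $-\sigma I$. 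On the in-plane $V_j$-coordinates the Hessian is block-diagonal across the index $i$, with $k \times k$ block $H^{(i)}$ equal to $-\sigma I_k$ when $c_i = 0$ and to $\sigma(\epsilon_{*, i}\,\epsilon_{*, i}^T - 2 I_k)$ when $c_i > 0$, where $\epsilon_{*, i} := (\epsilon_{1, i}, \ldots, \epsilon_{k, i})$; the latter uses the scalar identity from the previous paragraph to simplify the off-diagonal entries to $\sigma \epsilon_{j, i}\epsilon_{j', i}$.

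Finally, I would restrict to the tangent space. After the sign change $\tilde y_{j, i} := \epsilon_{j, i} y_{j, i}$, the sphere tangent constraint $\langle \mathbf y_j, \mathbf x_j\rangle = 0$ becomes $\sum_i c_i \tilde y_{j, i} = 0$, involving only indices $i$ with $c_i > 0$. Therefore the tangent space splits orthogonally under the Hessian: the $V_j^\perp$ directions together with the $c_i = 0$ slots are unconstrained and the Hessian acts as $-\sigma I$ there (nonsingular); the remaining part lives in the $c_i > 0$ slots subject to the coupled constraint. Setting $s := \#\{i : c_i > 0\}$ and choosing an orthonormal basis $\mathbf b_1, \ldots, \mathbf b_{s-1}$ of the hyperplane $\{z \in \mathbb R^s : \sum_i c_i z_i = 0\}$ transforms the constrained form into $s - 1$ independent copies of the $k \times k$ quadratic form $\sigma(\mathbf 1 \mathbf 1^T - 2 I_k)$ on $\mathbb R^k$, whose eigenvalues are $\sigma(k-2)$ (simple) and $-2\sigma$ (multiplicity $k-1$). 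Since $k \ge 3$ and $\sigma \ne 0$, every eigenvalue is nonzero, proving nondegeneracy. The main obstacle is the classification in the first step: unlike for a generic tensor, a nonzero singular vector tuple of an orthogonally decomposable $\mathcal A$ need not be supported on a single index (several $c_i$ can be simultaneously nonzero), so the Hessian analysis must handle these ``mixed'' configurations uniformly in $s$.
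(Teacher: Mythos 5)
Your argument is correct, and it is worth noting that for this particular theorem the paper gives no proof at all --- it simply cites \cite[Proposition~6.5]{HL-18}; the closest in-paper argument is the proof of the symmetric analogue (Theorem~\ref{thm:sym-orthogonal}). Compared with that template, your route shares the key idea (exploit the orthogonal structure to pin down the critical points, then verify an explicit second-order matrix is nonsingular --- indeed your eigenvalues $\sigma(k-2)$ and $-2\sigma$ are exactly the eigenvalues of the paper's block $(k-2)\lambda I-k\lambda\mathbf z\mathbf z^{\mathsf T}$), but the implementation differs in three genuine ways. First, you stay intrinsic: you compute the Riemannian Hessian of $G$ on the product of spheres as the Lagrangian Hessian restricted to the tangent space, rather than passing through a Proposition~\ref{prop:sym-manifold}-type equivalence between nondegeneracy and nonsingularity of the Jacobian of the squared polynomial system. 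Second, you do not normalize the tensor to diagonal form by a group action; working in coordinates adapted to the orthonormal frames $\{\mathbf u_i^{(j)}\}$ achieves the same decoupling, and moreover you never need the full enumeration of singular vector tuples --- only the structural facts that $\sigma\neq0$ forces $\mathbf x_j\in V_j$, $|t_{j,i}|=c_i$ independent of $j$, and the scalar identity $\lambda_i(\prod_j\epsilon_{j,i})c_i^{k-2}=\sigma$, which is leaner than the explicit classification in the paper's symmetric proof (and in \cite{R-16,RS-16}). Third, your final reduction of the constrained form to $s-1$ copies of $\sigma(\mathbf 1\mathbf 1^{\mathsf T}-2I_k)$, plus $-\sigma I$ on the unconstrained directions, handles the ``mixed-support'' tuples uniformly, which is precisely the point where the general (nonsymmetric, rectangular) case could otherwise get messy. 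Two small polish items: state explicitly the standard fact that at a critical point the Riemannian Hessian on a product of spheres coincides with the restriction of the Lagrangian Hessian to the tangent space (the paper invokes \cite{EAS-98} for the analogous formula \eqref{eq:sym-hessian}), and note that the multipliers for the $k$ sphere constraints are all equal to $\sigma$ by multilinearity --- you use both, and they are true, but they deserve a sentence each.
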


\section{Z-Eigenvectors}\label{sec:z-eigenvector}
In the following, we {recall} the definitions of Z-eigenvalues and Z-eigenvectors. Let $\operatorname{S}(\otimes^k\mathbb R^{n})\subset \otimes^k\mathbb R^{n}$ be the subspace of symmetric tensors in $\otimes^k\mathbb R^{n}$. 
\begin{definition}\label{def:z-eigenvalue}
Given a tensor $\mathcal A\in\operatorname{S}(\otimes^k\mathbb R^{n})$, a vector $\mathbf x\in \mathbb S^{n-1}$ is called a {\textit{Z-eigenvector}} of $\mathcal A$ if it is a critical point of the smooth function $S(\mathbf x):=\langle\mathcal A,\mathbf x^{\otimes k}\rangle$ on the sphere $\mathbb S^{n-1}$. The value of $S$ at a Z-eigenvector is called a \textit{Z-eigenvalue}.
\end{definition}
The definitions of Z-eigenvalues and Z-eigenvectors were proposed by Qi \cite{Q-05}. The prefix ``Z" is addressed to memorize Professor Shuzi Zhou in Hunan University \cite{Q-05}.
Given a tensor $\mathcal A\in\operatorname{S}(\otimes^k\mathbb R^{n})$, we note that, by using Lagrange multiplier method, the definition entails that
a {Z-eigenvector} $\mathbf x\in\mathbb S^{n-1}$ of $\mathcal A$
and  a corresponding {Z-eigenvalue} $\lambda$ satisfy (cf.\ \cite{B-99})
\begin{equation}\label{eq:singular}
\mathcal A\mathbf x^{k-1}=\lambda\mathbf x.
\end{equation}
It is easy to see from \eqref{eq:singular} that
\[
\lambda=\langle\mathcal A,\mathbf x^{\otimes k}\rangle
\]
for a Z-eigenvector $\mathbf x$ and the corresponding Z-eigenvalue $\lambda$.

\begin{definition}[Nondegenerate Z-Eigenvectors]\label{def:nongenerate}
We say a Z-eigenvector $\mathbf x$ of $\mathcal A$ is \textit{nondegenerate} if $\mathbf x$ is a nondegenerate critical point of $S$ on $\mathbb S^{n-1}$.
\end{definition}

{Sections~\ref{sec:symmetry} and \ref{sec:orthogonal} are established in the similar spirit as that in \cite{HL-18}. However, \cite{HL-18} is for singular vector tuples of nonsymmetric tensors, while Z-eigenvectors for symmetric tensors are studied here. The results in \cite{HL-18} cannot be applied here directly. }
\subsection{The Symmetric Case}\label{sec:symmetry}
Let $M$ be a smooth manifold, a function $f : M\rightarrow \mathbb R$ is called a \textit{Morse function} if each critical point of $f$ on $M$ is nondegenerate. The following result on existence of Morse functions is well-known, see for example \cite[Proposition~17.18]{BT-82}.
\begin{lemma}[Morse Functions]\label{lem:generic-morse}
Let $M$ be a manifold of dimension $m$ in $\mathbb R^n$. For almost all $\mathbf a:=(a_1,\dots,a_n)^\mathsf{T}\in\mathbb R^n$, the function
\[
f(\mathbf x)=a_1x_1+\dots+a_nx_n
\]
is a Morse function on $M$.
\end{lemma}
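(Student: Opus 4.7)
The plan is to recast the condition ``$f_{\mathbf a}|_M$ is Morse'' as the condition ``$\mathbf a$ is a regular value of a certain smooth map between equidimensional manifolds,'' and then appeal to Sard's theorem. This is the standard Morse-theoretic approach, and I expect no surprises beyond one local computation.

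First I would identify the locus of constrained critical points. The Lagrange multiplier condition shows that $\mathbf x \in M$ is a critical point of $f_{\mathbf a}|_M$ if and only if $\mathbf a$ lies in the normal space $(T_{\mathbf x}M)^\perp$. Consequently, the set of pairs making $\mathbf x$ critical is exactly the normal bundle
\[
NM := \{(\mathbf x, \mathbf a) \in M \times \mathbb R^n : \mathbf a \perp T_{\mathbf x}M\},
\]
a smooth manifold of dimension $m + (n-m) = n$. I would then consider the projection $\pi: NM \to \mathbb R^n$, $(\mathbf x, \mathbf a) \mapsto \mathbf a$; since source and target both have dimension $n$, Sard's theorem tells us that the critical values of $\pi$ form a set of Lebesgue measure zero in $\mathbb R^n$.

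The heart of the argument is the claim: $\mathbf a$ is a regular value of $\pi$ if and only if every critical point of $f_{\mathbf a}|_M$ is nondegenerate. Fix $(\mathbf x,\mathbf a) \in \pi^{-1}(\mathbf a)$ and choose a local orthonormal frame $\mathbf n_1,\dots,\mathbf n_{n-m}$ for the normal bundle of $M$ near $\mathbf x$; then $(\mathbf y,\mathbf c)\mapsto (\mathbf y,\sum_i c_i\mathbf n_i(\mathbf y))$ is a local chart for $NM$. Computing $d\pi$ in these coordinates, variations in $\mathbf c$ sweep out the normal space $N_{\mathbf x}M$ exactly, whereas variations in $\mathbf y\in M$ yield ambient vectors whose tangential component along $T_{\mathbf x}M$ is $-A_{\mathbf a}(X)$ by the Weingarten formula, where $A_{\mathbf a}:T_{\mathbf x}M\to T_{\mathbf x}M$ is the shape operator of $M$ in the normal direction $\mathbf a$. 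Thus $d\pi_{(\mathbf x,\mathbf a)}$ is surjective exactly when $A_{\mathbf a}$ is invertible. On the other hand, differentiating $\langle\mathbf a,\gamma(t)\rangle$ twice along a curve $\gamma\subset M$ through $\mathbf x$ identifies the Riemannian Hessian of $f_{\mathbf a}|_M$ at the critical point $\mathbf x$ with the second fundamental form contracted with $\mathbf a$, i.e., $\langle A_{\mathbf a}X,X\rangle$. So nondegeneracy of the Hessian is equivalent to invertibility of $A_{\mathbf a}$ as well. Combining these two equivalences proves the claim, and together with Sard's theorem applied to $\pi$, this yields that for almost every $\mathbf a\in\mathbb R^n$ the linear function $f_{\mathbf a}$ is Morse on $M$.

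The main obstacle is the local calculation underpinning the central claim: matching both the surjectivity of $d\pi$ and the nondegeneracy of the constrained Hessian to the \emph{same} shape operator $A_{\mathbf a}$. Everything else (dimension count for $NM$, characterization of constrained critical points, and the Sard step) is routine bookkeeping.
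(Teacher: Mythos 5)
Your argument is correct, and there is nothing in the paper to compare it against: the paper does not prove Lemma~\ref{lem:generic-morse} at all, but simply quotes it as a well-known fact with a citation to Bott--Tu. Your proof is essentially the classical one behind that citation. The three ingredients are all in order: (i) since the ambient gradient of $f_{\mathbf a}$ is the constant vector $\mathbf a$, the critical points of $f_{\mathbf a}|_M$ are exactly the points where $\mathbf a\in (T_{\mathbf x}M)^\perp$, so the parameter--point pairs form the total space of the normal bundle $NM$, which is a smooth manifold of dimension $n$; (ii) in a local trivialization $(\mathbf y,\mathbf c)\mapsto(\mathbf y,\sum_i c_i\mathbf n_i(\mathbf y))$ the differential of $\pi(\mathbf x,\mathbf a)=\mathbf a$ covers the normal space through the $\mathbf c$-directions, and its tangential block is $-A_{\mathbf a}$ by Weingarten, so $\mathbf a$ is a regular value of $\pi$ iff $A_{\mathbf a}$ is invertible at every preimage point (with empty fibers being the vacuous case of no critical points); (iii) at a critical point the tangential part $\nabla_X X$ of $\gamma''(0)$ is killed because $\mathbf a$ is normal there, so the Riemannian Hessian is the quadratic form $\langle A_{\mathbf a}X,X\rangle$ and its nondegeneracy is again invertibility of the same shape operator. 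Sard's theorem applied to the equidimensional map $\pi:NM\to\mathbb R^n$ then gives the full-measure conclusion. The only points I would make explicit in a final write-up are the standing hypotheses that make the machinery run (a smooth, second-countable embedded submanifold, so that $NM$ is a smooth manifold and Sard applies), and the sentence, which you use implicitly, that the normal contributions of the $\mathbf y$-variations are irrelevant because the $\mathbf c$-variations already fill the normal space; neither is a gap, just bookkeeping. An alternative route with the same content is the parametric transversality formulation in local coordinates (Guillemin--Pollack style), but your coordinate-free version via the shape operator is cleaner and matches the geometric spirit of the statement.
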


We will also need the following proposition on critical points of functions over two diffeomorphic smooth manifolds. Recall that two smooth manifolds $M_1$ and $M_2$ are called \textit{locally diffeomorphic} if there is a mapping $\phi : M_1\rightarrow M_2$ such that for each point $\mathbf x\in M_1$ there exist a neighborhood $U\subseteq M_1$ of $\mathbf x$ and a neighborhood $V\subseteq M_2$ of $\phi(\mathbf x)$ so that the restriction mapping $\phi : U\rightarrow V$ is a diffeomorphism \cite{dC-92}. In this case, the corresponding $\phi$ is called a \textit{local diffeomorphism} between $M_1$ and $M_2$. The following result can be found in \cite{HL-18}.

\begin{proposition}\label{prop:critical}
Let $M_1\subseteq\mathbb R^{n_1}$ and $M_2\subset\mathbb R^{n_2}$ be two locally diffeomorphic smooth manifolds of the same dimension $m\leq\min\{n_1,n_2\}$ and let
$\phi : M_1\rightarrow M_2$ be the corresponding local diffeomorphism.
Let $f : M_2\rightarrow \mathbb R$ be a smooth function. Then $\mathbf x\in M_1$ is a (nondegenerate) critical point of $f\circ \phi$ on $M_1$ if and only if $\phi(\mathbf x)$ is a (nondegenerate) critical point of $f$ on $M_2$.
\end{proposition}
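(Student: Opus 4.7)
The plan is to exploit locality of both the critical-point and nondegeneracy conditions, together with the fact that the differential of a local diffeomorphism is a linear isomorphism between tangent spaces.

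First I would fix $\mathbf x\in M_1$ and pass to neighborhoods $U\subseteq M_1$ of $\mathbf x$ and $V\subseteq M_2$ of $\phi(\mathbf x)$ on which the restriction $\phi\colon U\to V$ is an honest diffeomorphism of $m$-dimensional manifolds. The statement is entirely local at $\mathbf x$ and $\phi(\mathbf x)$, so replacing $M_1,M_2$ by $U,V$ loses no generality. Choose smooth charts $(U,\psi_1)$ and $(V,\psi_2)$ centered at $\mathbf x$ and $\phi(\mathbf x)$ respectively, so that in these charts $\phi$ is represented by a smooth diffeomorphism $\tilde\phi=\psi_2\circ\phi\circ\psi_1^{-1}$ of open sets in $\mathbb R^m$, and $f$ is represented by $\tilde f=f\circ\psi_2^{-1}$. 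Then $f\circ\phi$ is represented by $\tilde f\circ\tilde\phi$.

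Next, for the critical-point part, the chain rule gives
\[
d(\tilde f\circ\tilde\phi)_{\mathbf 0}=d\tilde f_{\tilde\phi(\mathbf 0)}\circ d\tilde\phi_{\mathbf 0},
\]
and since $d\tilde\phi_{\mathbf 0}\colon\mathbb R^m\to\mathbb R^m$ is an invertible linear map (as $\tilde\phi$ is a diffeomorphism), the composite vanishes if and only if $d\tilde f_{\tilde\phi(\mathbf 0)}$ vanishes. Translating back to the manifolds through the isomorphism $d\phi_{\mathbf x}\colon T_{\mathbf x}M_1\to T_{\phi(\mathbf x)}M_2$ shows that $\mathbf x$ is a critical point of $f\circ\phi$ iff $\phi(\mathbf x)$ is a critical point of $f$.

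For the nondegeneracy part I would use the fact that at a critical point the Hessian is a well-defined symmetric bilinear form on the tangent space, independent of auxiliary choices (connection or metric). Working in the local coordinates above, assume $\mathbf x$ is critical, so $\phi(\mathbf x)$ is as well. A direct computation of the Euclidean Hessian of $\tilde f\circ\tilde\phi$ at $\mathbf 0$ gives two terms: one involving $d^2\tilde\phi$ and $d\tilde f_{\tilde\phi(\mathbf 0)}$, which vanishes because $d\tilde f_{\tilde\phi(\mathbf 0)}=0$, and a second term yielding
\[
\operatorname{Hess}(\tilde f\circ\tilde\phi)_{\mathbf 0}(u,v)=\operatorname{Hess}(\tilde f)_{\tilde\phi(\mathbf 0)}\bigl(d\tilde\phi_{\mathbf 0}(u),\,d\tilde\phi_{\mathbf 0}(v)\bigr).
\]
Since $d\tilde\phi_{\mathbf 0}$ is a linear isomorphism, the bilinear form on the left is nondegenerate iff the one on the right is. Interpreting this identity invariantly through the charts and the isomorphism $d\phi_{\mathbf x}$ yields the Riemannian version on the tangent spaces and completes the equivalence.

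The one point that requires a little care, and which I view as the mild obstacle, is making sure that nondegeneracy as defined in the paper (via the Riemannian Hessian on the tangent space) is genuinely an intrinsic property at critical points, so that the local coordinate computation above is legitimate regardless of the Riemannian metrics $M_1$ and $M_2$ inherit from the ambient Euclidean spaces. This is standard (see e.g.\ \cite{BT-82}) and once invoked the rest is a one-line consequence of the chain rule applied twice.
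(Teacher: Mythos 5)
Your argument is correct: localizing to neighborhoods where $\phi$ restricts to a diffeomorphism, applying the chain rule for the first-order statement, and using the second-order chain rule at a critical point (where the term involving $d^2\tilde\phi$ is killed by $d\tilde f_{\tilde\phi(\mathbf 0)}=0$, so the Hessians are related by congruence through the isomorphism $d\tilde\phi_{\mathbf 0}$) is exactly the standard proof, and your flagged point --- that at a critical point the Hessian is intrinsic, so the Riemannian Hessian of \cite{BT-82} is nonsingular iff the coordinate Hessian form is nondegenerate --- is the right thing to check and is handled correctly. Note that the paper itself offers no proof of this proposition, merely citing \cite{HL-18}; your write-up is a correct self-contained substitute for that citation and matches the argument one would expect there.
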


Now, we are in the position to present one of our main results, showing that the function $S$ given in Definition~\ref{def:z-eigenvalue} is a Morse function on the sphere for almost all tensors.
\begin{theorem}[Almost All Nondegenerate Z-Eigenvectors]\label{thm:z-critical}
For almost all tensors in $\operatorname{S}(\otimes^k\mathbb R^n)$, each of its Z-eigenvector is nondegenerate.
\end{theorem}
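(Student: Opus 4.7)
The plan is to mimic the Segre-embedding strategy behind Theorem~\ref{thm:critical}, with the Segre mapping $\tau$ replaced by the Veronese mapping
\[
\nu : \mathbb{S}^{n-1} \to \operatorname{S}(\otimes^k\mathbb{R}^n), \quad \mathbf{x} \mapsto \mathbf{x}^{\otimes k}.
\]
Viewing $\operatorname{S}(\otimes^k\mathbb{R}^n)$ as a Euclidean space of dimension $N = \binom{n+k-1}{k}$ equipped with the Hilbert--Schmidt inner product, the objective in Definition~\ref{def:z-eigenvalue} factors as $S(\mathbf{x}) = f_{\mathcal{A}}(\nu(\mathbf{x}))$, where $f_{\mathcal{A}}(\mathcal{Y}) := \langle \mathcal{A}, \mathcal{Y}\rangle$ is a linear functional on $\operatorname{S}(\otimes^k\mathbb{R}^n)$. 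The two ingredients already in hand---Lemma~\ref{lem:generic-morse} on Morse-ness of generic linear functions and Proposition~\ref{prop:critical} on transport of (nondegenerate) critical points along local diffeomorphisms---then reduce the theorem to verifying that $\nu$ is a local diffeomorphism onto a smooth embedded submanifold of the ambient symmetric tensor space.

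The decisive technical step, which I expect to be the main obstacle, is showing that $\nu$ is an immersion. A direct computation gives
\[
d\nu_{\mathbf{x}}(\mathbf{v}) \;=\; k\,\operatorname{Sym}\!\left(\mathbf{v} \otimes \mathbf{x}^{\otimes(k-1)}\right),
\]
and for a nonzero tangent vector $\mathbf{v} \in T_{\mathbf{x}}\mathbb{S}^{n-1} = \mathbf{x}^\perp$, the vectors $\mathbf{x}$ and $\mathbf{v}$ are linearly independent, so the homogeneous polynomial $(\mathbf{x}^\mathsf{T}\mathbf{y})^{k-1}(\mathbf{v}^\mathsf{T}\mathbf{y})$ in $\mathbf{y}\in\mathbb{R}^n$ corresponding to $\operatorname{Sym}(\mathbf{v}\otimes\mathbf{x}^{\otimes(k-1)})$ is nonzero, and therefore so is the associated symmetric tensor. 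Hence $d\nu_{\mathbf{x}}$ is injective on $T_{\mathbf{x}}\mathbb{S}^{n-1}$, and $\nu$ is an immersion. Since $\mathbb{S}^{n-1}$ is compact and the fibers of $\nu$ have cardinality one when $k$ is odd and cardinality two (the antipodal pair $\{\mathbf{x},-\mathbf{x}\}$) when $k$ is even, the image $V := \nu(\mathbb{S}^{n-1})$ is a compact embedded smooth submanifold of $\operatorname{S}(\otimes^k\mathbb{R}^n)$ of dimension $n-1 = \dim \mathbb{S}^{n-1}$, and $\nu : \mathbb{S}^{n-1} \to V$ is a local diffeomorphism.

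Having established the manifold structure of $V$, I would apply Lemma~\ref{lem:generic-morse} to $V$ sitting inside $\operatorname{S}(\otimes^k\mathbb{R}^n) \cong \mathbb{R}^N$ to conclude that for almost all $\mathcal{A} \in \operatorname{S}(\otimes^k\mathbb{R}^n)$ the restricted linear function $f_{\mathcal{A}}|_V$ is a Morse function on $V$. Proposition~\ref{prop:critical}, applied to the local diffeomorphism $\nu$ between $\mathbb{S}^{n-1}$ and $V$ (of equal dimension $n-1$), then transfers the property across $\nu$: a point $\mathbf{x} \in \mathbb{S}^{n-1}$ is a (nondegenerate) critical point of $S = f_{\mathcal{A}} \circ \nu$ exactly when $\nu(\mathbf{x})$ is a (nondegenerate) critical point of $f_{\mathcal{A}}|_V$. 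Combining these facts, for almost all $\mathcal{A} \in \operatorname{S}(\otimes^k\mathbb{R}^n)$ every Z-eigenvector of $\mathcal{A}$ is nondegenerate, which is exactly the claim of the theorem.
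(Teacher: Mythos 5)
Your proposal is correct and takes essentially the same approach as the paper: factor $S(\mathbf x)=\langle\mathcal A,\mathbf x^{\otimes k}\rangle$ through the Veronese map restricted to $\mathbb S^{n-1}$, apply Lemma~\ref{lem:generic-morse} to the linear functional on the image manifold, and transfer (nondegenerate) criticality back via Proposition~\ref{prop:critical}. The only difference is in how the local-diffeomorphism step is verified---you compute the differential to show the restricted Veronese map is an immersion and use compactness (with the antipodal identification when $k$ is even) to get an embedded image, while the paper builds explicit local inverses from the largest-magnitude coordinate---and both verifications are sound.
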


\begin{proof}
Let
\[
M:=\{\mathcal A\in\operatorname{S}(\otimes^k\mathbb R^n)\mid \mathcal A=\mathbf x^{\otimes k}\ \text{with } \mathbf x\in\mathbb S^{n-1}\}
\]
be the image of the Veronese mapping restricted on the sphere $\mathbb S^{n-1}$ \cite{H-77}.
Recall that
\[
S(\mathbf x)=\langle\mathcal A,\mathbf x^{\otimes k}\rangle.
\]

Let $\psi : \mathbb S^{n-1}\rightarrow M$ be the Veronese mapping from the sphere to the manifold $M$, then we have
\[
S(\mathbf x)=(\tilde S\circ \psi)(\mathbf x),
\]
where $\tilde{S}(\mathcal U)=\langle\mathcal A,\mathcal U\rangle$ for all $\mathcal{U} \in \operatorname{S}(\otimes^k\mathbb R^n)$.
Note that {only} the independent entries of $\mathcal A$ and $\mathcal U$ are essential in the linear functional $\tilde{S}(\mathcal U)$. Thus, from Lemma~\ref{lem:generic-morse}, we see that $\tilde{S}$
is a Morse function over the manifold $M$ for almost all $\mathcal A\in \operatorname{S}(\otimes^k\mathbb R^n)$. {If one can show that
$\psi$ is a surjective local diffeomorphism from $\mathbb S^{n-1}$ to $M$, then the conclusion follows from Proposition~\ref{prop:critical}. The diffeomorphism fact will be proven in the sequel.} Actually, $S(\mathbf x)$ is then a Morse function on $\mathbb S^{n-1}$ and hence each Z-eigenvector is nondegenerate for almost all $\mathcal A$.

We now justify our claim that $M$ is a smooth manifold which is {locally diffeomorphic} to the sphere $\mathbb S^{n-1}$ via the Veronese mapping $\psi$. Given a point $\mathbf x\in\mathbb S^{n-1}$, let
\[
j_*\in\operatorname{argmax}\{|x_j|\mid j\in\{1,\dots,n\}\}.
\]
Obviously,
\[
|x_{j_*}|\geq \frac{1}{\sqrt{n}}.
\]
If we take $\epsilon<\frac{1}{2\sqrt{n}}$, then for every
\begin{equation}\label{eq:neig}
\mathbf y\in U:=\mathbb S^{n-1}\cap\{\mathbf w\mid\|\mathbf w-\mathbf x\|\leq\epsilon\},
\end{equation}
we have
\begin{equation}\label{eq:sign}
\operatorname{sign}([\psi(\mathbf y)]_{j_*\dots j_*})=\operatorname{sign}([\psi(\mathbf x)]_{j_*\dots j_*}),
\end{equation}
since $y_{j_*}$'s have constant sign over the neighborhood $U$ given as \eqref{eq:neig}.
In the following, we will show that $\psi : U\rightarrow M$ is a local diffeomorphism from $U$ to $V:=\psi(U)$.
To see this, let
\[
T(j_*):=\{\mathcal A\in \operatorname{S}(\otimes^k\mathbb R^n)\mid a_{j_*\dots j_*}\neq 0\}.
\]
Then, we have a smooth mapping $\phi_{j_*} : V\subset T(j_*)\cap M\rightarrow U\subset \mathbb R^{n}$ from $V$ to $U$ as
\[
\phi_{j_*}(\mathcal A) = \mathbf y
\]
where
\begin{equation}\label{eq:y}
\mathbf y:=\kappa\frac{(a_{1j_*\dots j_*},\dots,a_{nj_*\dots j_*})^\mathsf{T}}{\|(a_{1j_*\dots j_*},\dots,a_{nj_*\dots j_*})\|},
\end{equation}
and $\kappa\in\{-1,1\}$ is a constant such that
\[
\phi_{j_*}(\psi(\mathbf x)) = \mathbf x.
\]
It follows from \eqref{eq:sign} and \eqref{eq:y} that $\phi_{j_*}\circ\psi$ is the identity over $U$. Moreover, a direct verification shows that $\psi\circ\phi_{j_*}$ equals the identity mapping over $V$. So, we see that $\psi$ is a local diffeomorphism.
It is well-known that the unit sphere is a smooth manifold. Thus, $M$ is a smooth manifold which is locally diffeomorphic to the unit sphere which is of dimension $n-1$.
\end{proof}

\subsection{Orthogonally Decomposable Tensors}\label{sec:orthogonal}
In the following, we present a method for analyzing the nondegeneracy of Z-eigenvectors for a {given} tensor, other than a tensor in general position. It follows the approach introduced in \cite{HL-18}.

We consider the following system of polynomial equations for a given tensor $\mathcal A\in\operatorname{S}(\otimes^k\mathbb R^{n})$
\begin{equation}\label{eq:z-eigenvector}
T(\mathbf x):=\mathcal A\mathbf x^{k-1}-\langle\mathcal A,\mathbf x^{\otimes k}\rangle\mathbf x=\mathbf 0.
\end{equation}
We call a Z-eigenvector $\mathbf x$ of a given tensor $\mathcal A$ a \textit{nonzero Z-eigenvector} if the corresponding Z-eigenvalue $\lambda=\langle\mathcal A,\mathbf x^{\otimes k}\rangle$ is nonzero.
\begin{proposition}[Nonzero Z-eigenvector]\label{prop:sym-manifold}
Given a tensor $\mathcal A\in\operatorname{S}(\otimes^k\mathbb R^{n})$, a nonzero Z-eigenvector $\mathbf x$ is nondegenerate if and only if $\mathbf x$ is a nonsingular solution of $T(\mathbf x)=\mathbf 0$.
\end{proposition}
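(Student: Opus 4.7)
The plan is to relate the Jacobian of $T$ at the Z-eigenvector $\mathbf{x}$ to the Riemannian Hessian of $S$ on the sphere, using the Z-eigenequation $\mathcal{A}\mathbf{x}^{k-1}=\lambda\mathbf{x}$ with $\lambda=\langle\mathcal{A},\mathbf{x}^{\otimes k}\rangle\neq 0$. The first step is a direct differentiation: since the Jacobian of $\mathbf{x}\mapsto\mathcal{A}\mathbf{x}^{k-1}$ is $(k-1)\mathcal{A}\mathbf{x}^{k-2}$ (symmetry of $\mathcal{A}$ is used here) and $\nabla\langle\mathcal{A},\mathbf{x}^{\otimes k}\rangle=k\mathcal{A}\mathbf{x}^{k-1}$, I compute
\[
J_T(\mathbf{x})=(k-1)\mathcal{A}\mathbf{x}^{k-2}-\lambda I-k\mathbf{x}(\mathcal{A}\mathbf{x}^{k-1})^{\mathsf T}=(k-1)\mathcal{A}\mathbf{x}^{k-2}-\lambda I-k\lambda\mathbf{x}\mathbf{x}^{\mathsf T},
\]
where the second equality invokes $\mathcal{A}\mathbf{x}^{k-1}=\lambda\mathbf{x}$.

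Next I would show that $J_T(\mathbf{x})$ splits along the orthogonal decomposition $\mathbb{R}^n=\mathbb{R}\mathbf{x}\oplus\mathbf{x}^{\perp}$. A quick computation yields $J_T(\mathbf{x})\mathbf{x}=-2\lambda\mathbf{x}$, so $\mathbb{R}\mathbf{x}$ is an eigendirection with nonzero eigenvalue (this is where the hypothesis $\lambda\neq 0$ enters). For $v\in\mathbf{x}^{\perp}$, symmetry of $\mathcal{A}\mathbf{x}^{k-2}$ combined with $\mathcal{A}\mathbf{x}^{k-1}=\lambda\mathbf{x}$ gives $\langle\mathbf{x},\mathcal{A}\mathbf{x}^{k-2}v\rangle=\lambda\langle\mathbf{x},v\rangle=0$, hence $J_T(\mathbf{x})v\in\mathbf{x}^{\perp}$. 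Therefore
\[
J_T(\mathbf{x})=(-2\lambda)\oplus J_T(\mathbf{x})|_{\mathbf{x}^{\perp}},\qquad J_T(\mathbf{x})|_{\mathbf{x}^{\perp}}=(k-1)P\mathcal{A}\mathbf{x}^{k-2}P-\lambda I|_{\mathbf{x}^{\perp}},
\]
with $P=I-\mathbf{x}\mathbf{x}^{\mathsf T}$. Since $\lambda\neq 0$, nonsingularity of $J_T(\mathbf{x})$ on $\mathbb{R}^n$ is equivalent to nonsingularity of $J_T(\mathbf{x})|_{\mathbf{x}^{\perp}}$.

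Finally, I would compare with the Riemannian Hessian of $S$ at $\mathbf{x}$. The Euclidean Hessian of $S$ is $k(k-1)\mathcal{A}\mathbf{x}^{k-2}$, and since $\langle\nabla S(\mathbf{x}),\mathbf{x}\rangle=k\lambda$, the standard formula for the Riemannian Hessian on the sphere at a critical point gives
\[
\operatorname{Hess}^{\mathbb{S}^{n-1}}S(\mathbf{x})=P\bigl(k(k-1)\mathcal{A}\mathbf{x}^{k-2}-k\lambda I\bigr)P=k\cdot J_T(\mathbf{x})|_{\mathbf{x}^{\perp}}
\]
as endomorphisms of $T_{\mathbf{x}}\mathbb{S}^{n-1}=\mathbf{x}^{\perp}$. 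Thus $\mathbf{x}$ is a nondegenerate critical point of $S$ on the sphere, i.e.\ a nondegenerate Z-eigenvector, iff $J_T(\mathbf{x})|_{\mathbf{x}^{\perp}}$ is nonsingular, which by the previous paragraph happens iff $J_T(\mathbf{x})$ itself is nonsingular, i.e.\ $\mathbf{x}$ is a nonsingular solution of $T(\mathbf{x})=\mathbf{0}$.

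The only delicate point is the bookkeeping in identifying the Riemannian Hessian with the restriction of the Euclidean Jacobian of $T$; the critical conceptual step is recognizing that the rank-one correction $-k\lambda\mathbf{x}\mathbf{x}^{\mathsf T}$ in $J_T$ exactly accounts for the curvature term in the Riemannian Hessian, so that the ``extra'' direction $\mathbf{x}$ is harmless precisely when $\lambda\neq 0$—which explains why the nonzero hypothesis is indispensable.
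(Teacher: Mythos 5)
Your proof is correct and follows essentially the same route as the paper: you compute the same Jacobian $\nabla_{\mathbf x}T(\mathbf x)=(k-1)\mathcal A\mathbf x^{k-2}-\lambda I-k\lambda\mathbf x\mathbf x^{\mathsf T}$, identify its restriction to $\mathbf x^{\perp}$ with $\tfrac{1}{k}\operatorname{Hess}(S)(\mathbf x)$, and use $\nabla_{\mathbf x}T(\mathbf x)\mathbf x=-2\lambda\mathbf x$ with $\lambda\neq 0$ to handle the normal direction. The only difference is cosmetic: you package the argument as an explicit direct-sum splitting $(-2\lambda)\oplus J_T|_{\mathbf x^{\perp}}$, while the paper runs the converse direction by contradiction via the decomposition $\mathbf z=\alpha\mathbf x+\beta\mathbf u$.
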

\begin{proof}
{Let $\operatorname{T}_{\mathbb S^{n-1}}(\mathbf x)$ be the tangent space of $\mathbf x$ on $\mathbb S^{n-1}$.}
A direct calculation shows that the manifold Hessian $\operatorname{Hess}(S)(\mathbf x)$ of $S=\langle\mathcal A,\mathbf x^{\otimes k}\rangle$ at a Z-eigenvector $\mathbf x$ is given by the formula (cf.\ \cite{EAS-98})
\begin{equation}\label{eq:sym-hessian}
\langle \Delta^{(1)}, \operatorname{Hess}(S)(\mathbf x)\Delta^{(2)}\rangle=\langle\Delta^{(1)},(k(k-1)\mathcal A\mathbf x^{k-2}-k\lambda I)\Delta^{(2)}\rangle,
\end{equation}
for any two tangent vectors $\Delta^{(1)},\Delta^{(2)}\in \operatorname{T}_{\mathbb S^{n-1}}(\mathbf x)$, and in where $\lambda=S(\mathbf x)$.
Let $\operatorname{St}(n-1,n)$ be the Stiefel manifold consisting of $n\times (n-1)$ matrices with orthonormal columns, and $P\in\operatorname{St}(n-1,n)$ be such that $P^\mathsf{T}\mathbf x=\mathbf 0$.
Then the columns of $P$ form a basis for $\operatorname{T}_{\mathbb S^{n-1}}(\mathbf x)$ in the underlying Euclidean space $\mathbb R^n$.
Therefore, $\operatorname{Hess}(S)(\mathbf x)$ is singular  if and only if there exist nonzero $\mathbf y\in\mathbb R^{n-1}$ and $\alpha\in\mathbb R$ such that
\[
\operatorname{Hess}(S)(\mathbf x)P\mathbf y=\alpha\mathbf x.
\]
Note that the manifold Hessian of $S$ is a linear operator from the tangent space $\operatorname{T}_{\mathbb S^{n-1}}(\mathbf x)$ of the manifold to itself \cite{EAS-98}.
So, $\operatorname{Hess}(S)(\mathbf x)$ is singular  if and only if there exist nonzero $\mathbf y\in\mathbb R^{n-1}$ such that
\[
\operatorname{Hess}(S)(\mathbf x)P\mathbf y= \mathbf{0}.
\]

Now, suppose that $\mathbf x$ is a nonsingular solution of $T(\mathbf x)=\mathbf 0$. Then the following matrix $\nabla_{\mathbf x}T(\mathbf x)$ is nonsingular,
\[
\nabla_{\mathbf x}T(\mathbf x):=(k-1)\mathcal A\mathbf x^{k-2}-\langle\mathcal A,\mathbf x^{\otimes k}\rangle I-k(\mathcal A\mathbf x^{k-1})\mathbf x^\mathsf{T}=(k-1)\mathcal A\mathbf x^{k-2}-\lambda I-k\lambda \mathbf x\mathbf x^\mathsf{T},
\]
and so,
for any nonzero $\mathbf y\in\mathbb R^{n-1}$, we have
\[
\mathbf 0\neq \nabla_{\mathbf x}T(\mathbf x) P\mathbf y=\frac{1}{k}\operatorname{Hess}(S)(\mathbf x)P\mathbf y.
\]
This implies that the Hessian $\operatorname{Hess}(S)(\mathbf x)$ is a nonsingular linear operator from the tangent space to itself. Thus, $\mathbf x$ is a nondegenerate Z-eigenvector.

Conversely, suppose that $\operatorname{Hess}(S)(\mathbf x)$ is a nonsingular linear operator from the tangent space to itself. We proceed by the method of contradiction and assume that  $\nabla_{\mathbf x} T(\mathbf x)$ is a singular matrix, i.e., $\nabla_{\mathbf x} T(\mathbf x) \mathbf z=\mathbf 0$ for some $\mathbf z\in \mathbb R^{n}$ with $\mathbf z \neq \mathbf 0$.
Write $\mathbf z=\alpha\mathbf x+\beta\mathbf u$ as an orthogonal decomposition, where
$\alpha,\beta\in\mathbb R$ and $\mathbf u^\mathsf{T}\mathbf x=0$. Then, we have
\[
\nabla_{\mathbf x} T(\mathbf x)(\alpha\mathbf x+\beta\mathbf u)=\mathbf 0.
\]
On the other hand, a direct calculation shows that
\[
\nabla_{\mathbf x} T(\mathbf x)(\alpha\mathbf x+\beta\mathbf u)={-2\lambda\alpha\mathbf x+\frac{1}{k}\operatorname{Hess}(S)(\mathbf x)(\beta\mathbf u).}
\]
Since $\operatorname{Hess}(S)(\mathbf x)$ maps a tangent vector into the tangent space, if the Z-eigenvalue $\lambda\neq 0$, we must have both
\[
\alpha\mathbf x=\mathbf 0\ \text{and }\operatorname{Hess}(S)(\mathbf x)(\beta\mathbf u)=\mathbf 0.
\]
This, together with the nonsingularity of $\operatorname{Hess}(S)(\mathbf x)$, implies that $\beta \mathbf u = \gamma \mathbf x$ for some $\gamma \in \mathbb{R}$. It follows that $\beta \mathbf u=\mathbf 0$ because it is an orthogonal decomposition of $\mathbf z$. This contradicts the fact that $\mathbf z \neq \mathbf 0$, and so, the conclusion follows.
\end{proof}

The merit of Proposition~\ref{prop:sym-manifold} is that it transforms a geometric object to an algebraic one. It is a starting point for the further discussion for tensors with complex entries in the following Section~\ref{sec:z-eigenvector-nsym}.

\begin{theorem}[Symmetric Orthogonally Decomposable Tensors]\label{thm:sym-orthogonal}
Let $k\geq 3$.
If a tensor $\mathcal A\in\operatorname{S}(\otimes^k\mathbb R^{n})$ is orthogonally decomposable, i.e.,
\begin{equation}\label{eq:sym-orthogonal}
\mathcal A=\sum_{i=1}^r\lambda_i\mathbf u_i^{\otimes k}
\end{equation}
for an orthonormal matrix $[\mathbf u_1\ \dots\ \mathbf u_r]\in\mathbb R^{n\times r}$, then each of its nonzero Z-eigenvectors is nondegenerate.
\end{theorem}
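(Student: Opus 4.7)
The plan is to invoke Proposition~\ref{prop:sym-manifold}, which reduces the claim to checking, for every nonzero Z-eigenvector $\mathbf x$ (with Z-eigenvalue $\lambda\neq 0$), that the Riemannian Hessian $\operatorname{Hess}(S)(\mathbf x)$ is a nonsingular operator on the tangent space $\operatorname{T}_{\mathbb S^{n-1}}(\mathbf x)=\mathbf x^\perp$. Via the formula~\eqref{eq:sym-hessian}, this amounts to showing that the ambient symmetric operator $H:=k(k-1)\mathcal A\mathbf x^{k-2}-k\lambda I$ restricts to an invertible map on $\mathbf x^\perp$.

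I would first complete $\{\mathbf u_1,\dots,\mathbf u_r\}$ to an orthonormal basis $\{\mathbf u_1,\dots,\mathbf u_n\}$ of $\mathbb R^n$ (and set $\lambda_i:=0$ for $i>r$ so that \eqref{eq:sym-orthogonal} is indexed up to $n$). Writing $\mathbf x=\sum_i c_i \mathbf u_i$ with $c_i:=\langle\mathbf u_i,\mathbf x\rangle$, a direct expansion gives
\[
\mathcal A\mathbf x^{k-1}=\sum_{i=1}^n \lambda_i c_i^{k-1}\mathbf u_i, \qquad \mathcal A\mathbf x^{k-2}=\sum_{i=1}^n \lambda_i c_i^{k-2}\mathbf u_i\mathbf u_i^{\mathsf T}.
\]
The Z-eigenvector equation $\mathcal A\mathbf x^{k-1}=\lambda\mathbf x$, combined with $\lambda\neq 0$, forces $c_i=0$ for $i>r$ and $\lambda_i c_i^{k-2}=\lambda$ at every $i\in I_1:=\{i\leq r : c_i\neq 0\}$. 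Setting $I_2:=\{1,\dots,n\}\setminus I_1$, the displayed expression for $\mathcal A\mathbf x^{k-2}$ shows that $H$ is diagonal in the basis $\{\mathbf u_i\}$, with $H\mathbf u_i=k(k-2)\lambda\,\mathbf u_i$ for $i\in I_1$ and $H\mathbf u_i=-k\lambda\,\mathbf u_i$ for $i\in I_2$ (using $k\geq 3$, so $c_i^{k-2}=0$ whenever $c_i=0$).

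Both eigenvalues $k(k-2)\lambda$ and $-k\lambda$ are nonzero, so $H$ is invertible on $\mathbb R^n$. Moreover $\mathbf x\in\operatorname{span}\{\mathbf u_i:i\in I_1\}$ is itself an eigenvector of $H$, hence $H$ preserves $\mathbf x^\perp$, and its restriction to $\mathbf x^\perp$ is therefore invertible. This suffices by Proposition~\ref{prop:sym-manifold}. The only nontrivial step is the middle paragraph, where the algebraic rigidity of the orthogonally decomposable class forces $\mathbf x$ into $\operatorname{span}\{\mathbf u_1,\dots,\mathbf u_r\}$ and collapses $H$ to a two-eigenvalue diagonal operator; everything else is immediate linear algebra, and no genericity or perturbation argument is required.
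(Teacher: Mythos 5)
Your proof is correct, and it takes a leaner route than the paper. The paper first reduces, via the orthogonal group action $\mathcal A\mapsto U\cdot\mathcal A$, to the case of a diagonal tensor, then explicitly classifies \emph{all} nonzero Z-eigenpairs of that diagonal tensor (formula \eqref{eq:singular-vector}), computes $\nabla_{\mathbf x}T(\mathbf x)$ in block form for each such eigenvector, and concludes through Proposition~\ref{prop:sym-manifold}. You avoid both the reduction and the full classification: writing $\mathbf x=\sum_i c_i\mathbf u_i$ in a completed orthonormal basis, you extract from $\mathcal A\mathbf x^{k-1}=\lambda\mathbf x$ with $\lambda\neq 0$ only the two facts actually needed ($c_i=0$ off the support, $\lambda_i c_i^{k-2}=\lambda$ on it), which makes $H=k(k-1)\mathcal A\mathbf x^{k-2}-k\lambda I$ diagonal with eigenvalues $k(k-2)\lambda$ and $-k\lambda$, both nonzero for $k\geq 3$; since $\mathbf x$ is an $H$-eigenvector and $H$ is symmetric, $H$ preserves $\mathbf x^\perp$ and its restriction there is invertible, which is exactly nonsingularity of $\operatorname{Hess}(S)(\mathbf x)$ via \eqref{eq:sym-hessian}. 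One small attribution slip: what you verify is nondegeneracy directly from Definition~\ref{def:nongenerate} (the Riemannian Hessian condition), whereas Proposition~\ref{prop:sym-manifold} concerns the Jacobian of $T(\mathbf x)=\mathbf 0$; your appeal to that proposition is therefore unnecessary (and as stated slightly mischaracterizes it), but nothing in your argument depends on it. What the paper's longer route buys is the explicit description and count of all nonzero Z-eigenpairs of an orthogonally decomposable tensor, which it reuses in the remarks after the theorem; what your route buys is brevity and the absence of any case analysis over the sign patterns in $\Lambda_s$.
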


\begin{proof}
Given a tensor $\mathcal A\in\operatorname{S}(\otimes^k\mathbb R^{n})$ and an orthogonal matrix $U\in\O(n)$, we can define an action $U\cdot\mathcal A$, which is also a tensor in $\operatorname{S}(\otimes^k\mathbb R^{n})$, {component-wisely via}
\[
(U\cdot\mathcal A)_{i_1\dots i_k}=\sum_{j_1,\dots,j_k=1}^nu_{i_1j_1}\dots u_{i_kj_k}a_{j_1\dots j_k}\ \text{for all }i_1,\dots,i_k\in\{1,\dots,n\}.
\]

Let $\mathcal B:=U\cdot\mathcal A$. It is a direct calculation to see that $\mathbf x$ is a (nonzero) Z-eigenvector of $\mathcal A$ if and only if $U\mathbf x$ is a (nonzero) Z-eigenvector of $\mathcal B$. Moreover, if let $T_{\mathcal A}(\mathbf x)$ be the corresponding system of equations as \eqref{eq:z-eigenvector} for the tensor $\mathcal A$ and $T_{\mathcal B}(\mathbf x)$ for the tensor $\mathcal B$, then we have
\[
\nabla T_{\mathcal B}(\mathbf x)=U(\nabla T_{\mathcal A}(U^\mathsf{T}\mathbf x))U^\mathsf{T}.
\]
Therefore, $\mathbf x$ is a nondegenerate Z-eigenvector of $\mathcal A$ if and only if $U\mathbf x$ is a nondegenerate Z-eigenvector of $\mathcal B$.

With this orthogonal action and the equivalence, we can assume without loss of generality that the tensor $\mathcal A$ is a diagonal tensor with the first $r$ nonzero diagonal elements being $\lambda_1,\dots,\lambda_r$, and the rest diagonal elements being zero.

It is easy after a direct calculation to see that each nonzero Z-eigenpair (a nonzero Z-eigenvalue together with a Z-eigenvector) of $\mathcal{A}$ is of the form: for all $s=1,\dots,r$,
\begin{equation}\label{eq:singular-vector}
(\lambda,\mathbf x)=\Bigg(\operatorname{sign}(\Lambda_s)\bigg(\frac{1}{\sum_{i\in\Lambda_s }\lambda_i^{\frac{-2}{k-2}}}\bigg)^{\frac{k-2}{2}},P\mathbf w\Bigg),
\end{equation}
where $\Lambda_s\subseteq\{1,\dots,k\}$ is a subset of cardinality $s\leq k$ such that
\[
\operatorname{sign}(\lambda_t)\ \text{is constant for all }t\in \Lambda_s\ {\text{if }k\ \text{is even}},
\]
and
\[
\operatorname{sign}(\Lambda_s):=\begin{cases}\operatorname{sign}(\lambda_t)\ \text{for some }t\in \Lambda_s& \text{if }k\ \text{is even},\\ 1\ \text{or }-1&\text{if }k\ \text{is odd},\end{cases}
\]
the vector $\mathbf w$ is
\[
\mathbf w=\bigg(\frac{1}{\sum_{i\in\Lambda_s }\lambda_i^{\frac{-2}{k-2}}}\bigg)^{\frac{1}{2}}\big(|\lambda_1|^{\frac{-1}{k-2}},\dots,
|\lambda_{r}|^{\frac{-1}{k-2}}\big)^\mathsf{T},
\]
and the matrix $P$ satisfy the following property $(\operatorname{P})$:
\begin{itemize}
\item [($\operatorname{P}$) \ \ \ ] $P\in\mathbb R^{n\times r}$ is a diagonal matrix with the $(j,j)$-th diagonal element satisfying
\[
\left\{\begin{array}{ll}
 p_{jj} \in \{-1,1\}, & \mbox{ if } j \in \Lambda_s, \\
 p_{jj} =0, & \mbox{ otherwise}
       \end{array}
  \right.
\]{
such that
\[
\operatorname{sign}(p_{jj})\operatorname{sign}(\lambda_t)=\operatorname{sign}(\Lambda_s)\ \text{for all }t\in \Lambda_s\ \text{if }k\ \text{is odd}.
\]}
\end{itemize}
We can have a count on the total number of nonzero Z-eigenvectors of $\mathcal A$ as that in \cite{HL-18}.

Let
\[
\sigma:=\bigg(\frac{1}{\sum_{i\in\Lambda_s }\lambda_i^{\frac{-2}{k-2}}}\bigg)^{\frac{1}{2}}.
\]
In the following, we derive the nondegeneracy of a nonzero Z-eigenvector by using Proposition~\ref{prop:sym-manifold}.
Suppose without loss of generality that $\Lambda_s=\{1,\dots,s\}$ for some $s\leq k$ {and each nonzero component of the eigenvector is positive}. Then we have $\mathbf x=(\mathbf z^\mathsf{T},\mathbf 0)^\mathsf{T}$ with
\[
\mathbf z:=\sigma \big(|\lambda_1|^{\frac{-1}{k-2}},\dots,
|\lambda_{s}|^{\frac{-1}{k-2}}\big)^\mathsf{T},
\]
and
\[
\mathcal A\mathbf x^{k-2}=\begin{bmatrix}\lambda I& 0\\  0&0\end{bmatrix},
\]
where the size of the identity matrix $I$ is $s\times s$.
A direct calculation shows that
\[
\nabla_{\mathbf x} T(\mathbf x)=\begin{bmatrix}(k-2)\lambda I-k\lambda\mathbf z\mathbf z^\mathsf{T}&  0\\0 &-\lambda I\end{bmatrix},
\]
which is a nonsingular matrix.
The conclusion then follows from Proposition~\ref{prop:sym-manifold}.
\end{proof}

In the proof of Theorem~\ref{thm:sym-orthogonal}, the Z-eigenvectors are characterized for a symmetric tensor. This is also derived in \cite{R-16} by using algebraic geometry tools. Besides our concern is nondegeneracy here, our derivation is more elementary. For the singular vector tuple case, we refer to \cite{HL-18,RS-16}.
It follows from \cite{HHLQ-13} that for a generic tensor (e.g., a tensor with nonzero determinant) there exists only nonzero Z-eigenvectors.
\subsection{Generalizations of Z-eigenvectors}\label{sec:z-eigenvector-nsym}
Z-eigenvectors were defined for a general nonsymmetric tensor as well \cite{Q-05,L-05}.
In this section, we study the general case with nonsymmetric tensors.
To that end, a general concept is recalled.
A nonzero solution of the following system
\[
\mathcal A\mathbf x^{k-1}\wedge\mathbf x=\mathbf 0
\]
is called a \textit{E-eigenvector}, which can be complex \cite{HQ-14}. E-eigenvectors can only be determined up to scaling, and thus they are actually equivalence classes \cite{CS-13}. It is easy to see that Z-eigenvectors are normalized real E-eigenvectors for a real tensor.

In the following, we will continue our discussion as Section~\ref{sec:orthogonal}.  We consider the following system of polynomial equations for a given tensor $\mathcal A\in \otimes^k\mathbb C^{n}$
\begin{equation}\label{eq:z-eigenvector-nsym}
T(\mathbf x):=\mathcal A\mathbf x^{k-1}-\langle\mathcal A,\mathbf x^{\otimes k}\rangle\mathbf x=\mathbf 0.
\end{equation}

For a \textit{nonsingular tensor} (a tensor with nonzero determinant, which is a generic property, cf. \cite{HQ-14}), E-eigenvectors can be completely characterized by solutions of \eqref{eq:z-eigenvector-nsym} \cite{HQ-14}.

In view of Proposition~\ref{prop:sym-manifold}, a nondegenerate Z-eigenvector for a general $\mathcal A\in \otimes^k\mathbb R^{n}$ should be defined as a nonsingular real solution of \eqref{eq:z-eigenvector-nsym}. More generally, we have the following definition.
\begin{definition}[Nondegenerate E-eigenvector]\label{def:e-eigenvector}
An E-eigenvector of a given tensor $\mathcal A$ is nondegenerate if it is a nonsingular solution of the system \eqref{eq:z-eigenvector-nsym}.
\end{definition}

Let us start by recalling the following lemma which can be found in \cite[Theorem~7.1.1]{SW-05}.
\begin{lemma}[Parametric Polynomial Systems]\label{lem:polynomial}
Let $G(\mathbf x;\mathbf y) : \mathbb C^s\times\mathbb C^t\rightarrow\mathbb C^s$ be a system of polynomials in $s$ variables collected in $\mathbf x$ and $t$ parameters collected in $\mathbf y$. Denote by $\mathcal N(\mathbf y)$ the number of nonsingular solutions of $G(\mathbf x,\mathbf y)=\mathbf 0$ as a function of $\mathbf y$, i.e.,
\[
\mathcal N(\mathbf y):=\#\{\mathbf x\in\mathbb C^s\mid G(\mathbf x,\mathbf y)=\mathbf 0, \ \operatorname{det}(\nabla_{\mathbf x}G(\mathbf x,\mathbf y))\neq 0\}.
\]
Then we have
\begin{enumerate}
\item $\mathcal N(\mathbf y)$ is finite, and the same constant $\mathcal N$ for all $\mathbf y$ in a nonempty Zariski open subset of $\mathbb C^t$.
\item $\mathcal N(\mathbf y)\leq\mathcal N$ for all $\mathbf y\in\mathbb C^t$.
\end{enumerate}
\end{lemma}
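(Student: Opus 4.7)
The plan is to view the statement through the incidence variety $V:=\{(\mathbf x,\mathbf y)\in \mathbb C^s\times\mathbb C^t\mid G(\mathbf x,\mathbf y)=\mathbf 0\}$ and its Zariski open subset $V^\circ:=V\cap\{\det(\nabla_{\mathbf x}G)\neq 0\}$, together with the projection $\pi:V\to \mathbb C^t$ onto the parameter space. By the holomorphic implicit function theorem, at every point $(\mathbf x_0,\mathbf y_0)\in V^\circ$ the equation $G=\mathbf 0$ is locally solvable for $\mathbf x$ as an analytic function of $\mathbf y$, so $\pi|_{V^\circ}$ is a local analytic isomorphism onto its image. In particular, every irreducible component of $V$ meeting $V^\circ$ has dimension exactly $t$, and every fibre $\pi^{-1}(\mathbf y)\cap V^\circ$ is a discrete algebraic subset of $\mathbb C^s$, hence finite; this already gives $\mathcal N(\mathbf y)<\infty$ for every $\mathbf y$.

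To produce the generic constant, I would decompose $V=V_1\cup\dots\cup V_k$ into irreducible components, discarding those disjoint from $V^\circ$. Each remaining $V_i$ has dimension $t$, so $\pi|_{V_i}$ is either dominant or its image is contained in a proper Zariski closed set $Z_i\subsetneq \mathbb C^t$. For a dominant $V_i$, the generic fibre of $\pi|_{V_i}$ is a finite étale scheme over $\operatorname{Spec}\mathbb C(\mathbf y)$ whose cardinality equals the degree $d_i$ of the field extension $\mathbb C(V_i)/\mathbb C(\mathbf y)$, and a spreading-out argument shows this cardinality is attained on a nonempty Zariski open $U_i\subset \mathbb C^t$. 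Setting $U:=\bigcap_{\text{dom.\ }i}U_i\setminus\bigcup_{\text{non-dom.\ }i}Z_i$ yields a nonempty Zariski open subset of $\mathbb C^t$ on which $\mathcal N(\mathbf y)=\sum_{\text{dom.\ }i}d_i=:\mathcal N$, completing (1).

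For (2), I fix any $\mathbf y_0\in\mathbb C^t$ and enumerate its nonsingular solutions $\mathbf x^{(1)},\dots,\mathbf x^{(\mathcal N(\mathbf y_0))}$. Another application of the implicit function theorem at each $(\mathbf x^{(j)},\mathbf y_0)$ produces a common Euclidean ball $W\subset \mathbb C^t$ around $\mathbf y_0$ and analytic sections $\sigma_j:W\to V^\circ$ with $\sigma_j(\mathbf y_0)=(\mathbf x^{(j)},\mathbf y_0)$. After shrinking $W$, the sections have pairwise disjoint images because the $\mathbf x^{(j)}$ are distinct, so $\mathcal N(\mathbf y)\geq \mathcal N(\mathbf y_0)$ for every $\mathbf y\in W$. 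Since the Zariski dense set $U$ from the previous paragraph is also Euclidean dense, $W\cap U\neq\emptyset$, and evaluating at any $\mathbf y\in W\cap U$ gives $\mathcal N(\mathbf y_0)\leq \mathcal N(\mathbf y)=\mathcal N$.

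The main obstacle I foresee is the generic constancy step, where one must rule out fluctuation of the fibre cardinality of $\pi|_{V_i\cap V^\circ}$ on Zariski dense subsets of $\mathbb C^t$. The function-field argument sketched above is the cleanest tool; alternatively, an elimination-theoretic proof via a Gröbner basis of the ideal generated by the components of $G$ in $\mathbb C(\mathbf y)[\mathbf x]$, followed by clearing denominators to carve out a Zariski open locus of ``good'' parameters, reaches the same conclusion by more elementary means. By contrast, the two applications of the implicit function theorem and the reduction to the irreducible decomposition of $V$ are essentially bookkeeping.
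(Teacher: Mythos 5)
The paper offers no proof of this lemma to compare against: it is imported verbatim from \cite[Theorem~7.1.1]{SW-05} (the coefficient--parameter continuation theorem of numerical algebraic geometry), so what you have written is a self-contained substitute for a citation. Your substitute follows what is essentially the standard argument for that theorem: pass to the incidence variety $V=\{G(\mathbf x,\mathbf y)=\mathbf 0\}$ and its open locus $V^\circ$ where $\det(\nabla_{\mathbf x}G)\neq 0$, use the holomorphic implicit function theorem to see that the parameter projection $\pi$ is a local biholomorphism on $V^\circ$ (whence every component of $V$ meeting $V^\circ$ has dimension exactly $t$, and the nonsingular solutions over any fixed $\mathbf y$ are isolated points of the affine set $\{G(\cdot,\mathbf y)=\mathbf 0\}$, hence finite in number --- this is the cleaner way to phrase your ``discrete algebraic subset'' step), count generic fibres of the dominant components by the degree $d_i=[\mathbb C(V_i):\mathbb C(\mathbf y)]$, which is legitimate in characteristic zero, and obtain (2) by continuing the $\mathcal N(\mathbf y_0)$ isolated solutions analytically over a Euclidean ball $W$ that must meet the Zariski-dense good set $U$. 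The argument for (2) is complete as written.

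The one step that needs repair is the generic count in (1). With $U:=\bigcap_i U_i\setminus\bigcup_i Z_i$ exactly as you defined it, the asserted identity $\mathcal N(\mathbf y)=\sum_{\mathrm{dom}\ i}d_i$ does not yet follow: $\mathcal N$ counts only \emph{nonsingular} solutions, each once, while nothing in your $U$ rules out that some of the $d_i$ fibre points of a dominant component $V_i$ lie in $V_i\setminus V^\circ$ (so they satisfy $\det(\nabla_{\mathbf x}G)=0$ and are not counted), or lie on a second component $V_j$ (so one solution is charged to two summands). Both defects are removed with the tools you already use: since $V_i$ is irreducible of dimension $t$ and meets $V^\circ$, the sets $V_i\setminus V^\circ$ and $V_i\cap V_j$ ($j\neq i$) are proper closed subsets of $V_i$, hence of dimension at most $t-1$, so the Zariski closures of their images under $\pi$ are proper closed subsets of $\mathbb C^t$; these finitely many closed sets must also be deleted when forming $U$. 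Over the enlarged-complement $U$ every fibre point of every dominant $V_i$ is then a nonsingular solution lying on exactly one component, and conversely every nonsingular solution over such $\mathbf y$ lies on some dominant $V_i$, so $\mathcal N(\mathbf y)=\sum_{\mathrm{dom}\ i}d_i=\mathcal N$ there; the remainder of your proof, including the semicontinuity argument for (2), goes through unchanged.
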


Since $\mathbb R^t$ is Zariski dense in $\mathbb C^t$, we can replace $\mathbb C^t$ in Lemma~\ref{lem:polynomial} with $\mathbb R^t$ without destroying the conclusions.

Lemma~\ref{lem:polynomial} is applicable to system \eqref{eq:z-eigenvector-nsym} with the parameter being the tensor $\mathcal A$.
Given a tensor space of size $\otimes^k\mathbb C^n$, the number of E-eigenvectors is
\begin{equation}\label{eq:generic}
\frac{(k-1)^n-1}{k-2}
\end{equation}
if there are only finitely many eigenvectors, which is also the number of E-eigenvectors for a generic tensor \cite{CS-13}.

Thus, the maximal (generic) number of nonsingular solutions of \eqref{eq:z-eigenvector-nsym}, which is the maximal (generic) number of nondegenerate E-eigenvectors, depends on $n$ and $k$ solely. Therefore, for a given tensor space $\otimes^k\mathbb C^n$, the corresponding constant number for a generic tensor with which the system \eqref{eq:z-eigenvector-nsym} has that number of nonsingular solutions in the complex space $\otimes^k\mathbb C^{n}$ can be denoted by $\mathcal N(n,k)$.  Then for every real tensor space $\otimes^k\mathbb R^{n}$, a generic tensor in it has exactly $\mathcal N(n,k)$ nondegenerate E-eigenvectors. Consequently, the number of {nondegenerate} Z-eigenvectors is upper bounded by this $\mathcal N(n,k)$.

Lemma~\ref{lem:polynomial} is for solutions in the algebraic closed field $\mathbb C$. The situation becomes complicated immediately when we switch the interest to real solutions in $\mathbb R^s$, such as Z-eigenvectors of real tensors.
Although $\mathbb R^t$ is Zariski dense in $\mathbb C^t$, the analogue of the number of \textit{real nonsingular solutions}
\[
\mathcal N_{\mathbb R}(\mathbf y):=\#\{\mathbf x\in\mathbb R^s\mid G(\mathbf x,\mathbf y)=\mathbf 0,\ \operatorname{det}(\nabla_{\mathbf x}G(\mathbf x,\mathbf y))\neq 0\}
\]
needs not be a generic constant over $\mathbb R^t$. Of course, {$\mathcal N(n,k)$} is still an upper bound {of} $\mathcal N_{\mathbb R}(\mathbf y)$ for all $\mathbf y\in\mathbb R^t$. It is believed that there are several typical numbers for $\mathcal N_{\mathbb R}(\mathbf y)$ over $\mathbf y\in\mathbb R^t$, as the real tensor rank \cite{L-12}.

Put aside the complicated real case, the exact value for $\mathcal N(n,k)$ (in the complex case) for the system \eqref{eq:z-eigenvector-nsym} is unknown for a given tensor space in the literature. However, Theorem~\ref{thm:sym-orthogonal} gives a lower bound for $\mathcal N(n,k)$. {It is conjectured that $\mathcal N(n,k)$ is equal to the number \eqref{eq:generic}.}

Lemma~\ref{lem:polynomial}, together with Theorem~\ref{thm:critical}, implies the following ``generic version" which is a stronger statement than Theorem~\ref{thm:critical}, since Zariski topology is weaker than the usual Euclidean topology \cite{H-77}.
\begin{theorem}[Generic Nondegenerate Singular Vector Tuples]\label{thm:critical-generic}
For a generic tensor in $\mathbb R^{n_1}\otimes\dots\otimes\mathbb R^{n_k}$, each of its singular vector tuples is nondegenerate.
\end{theorem}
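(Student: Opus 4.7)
The plan is to upgrade the measure-theoretic statement of Theorem~\ref{thm:critical} to a Zariski-generic one by showing that the set of tensors admitting at least one degenerate singular vector tuple lies in a proper algebraic subvariety of the parameter space.

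First, I would encode singular vector tuples of $\mathcal A$ as solutions of a parametric polynomial system $G(\mathbf x;\mathcal A)=\mathbf 0$, in the same spirit as the system \eqref{eq:z-eigenvector} used in Proposition~\ref{prop:sym-manifold} for Z-eigenvectors. Concretely, one eliminates the Lagrange multiplier $\sigma=\langle\mathcal A,\tau(\mathbf x)\rangle$ from the Euler--Lagrange equations for critical points of $\langle\mathcal A,\tau(\mathbf x)\rangle$ on the joint sphere $\mathbb S$, obtaining a scaling-compatible polynomial system in $\mathbf x=(\mathbf x_1,\dots,\mathbf x_k)$ whose nonzero normalized solutions recover the singular vector tuples. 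The analogue of Proposition~\ref{prop:sym-manifold} for the nonsymmetric singular case, established in \cite{HL-18}, identifies nondegeneracy of a nonzero singular vector tuple with nonsingularity of the Jacobian $\nabla_{\mathbf x}G(\mathbf x;\mathcal A)$.

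With this reformulation, I would consider the incidence variety $V=\{(\mathbf x,\mathcal A):G(\mathbf x;\mathcal A)=\mathbf 0\}$ and its algebraic subvariety $D=\{(\mathbf x,\mathcal A)\in V:\det\nabla_{\mathbf x}G(\mathbf x;\mathcal A)=0\}$; by Chevalley's theorem, the projection $\pi(D)$ onto the complex parameter space $\mathbb C^{n_1}\otimes\dots\otimes\mathbb C^{n_k}$ is a constructible subset. Lemma~\ref{lem:polynomial} further guarantees that the generic nonsingular solution count is a finite constant $\mathcal N$ attained on a non-empty Zariski open subset, so the notion of a deficient parameter is well posed. If $\pi(D)$ were Zariski-dense in the parameter space, it would contain a non-empty Zariski open subset whose real points would form a set of full Lebesgue measure in $\mathbb R^{n_1}\otimes\dots\otimes\mathbb R^{n_k}$, contradicting the measure-zero conclusion of Theorem~\ref{thm:critical}. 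Consequently $\overline{\pi(D)}$ is a proper subvariety, and its complement is the desired non-empty Zariski open set $W$ on which every singular vector tuple is nondegenerate.

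The main obstacle I expect is the first step: setting up the polynomial system $G$ so that its Jacobian criterion captures exactly the Riemannian nondegeneracy on the joint sphere. The joint sphere is not naturally a complex algebraic variety in the required sense, so one has to exploit the scaling freedom of $\tau$ to eliminate the Lagrange multiplier and work in an affine chart where the normalization is transparent, then verify that $\det\nabla_{\mathbf x}G\neq 0$ is equivalent to invertibility of the Riemannian Hessian of $\langle\mathcal A,\tau(\cdot)\rangle|_{\mathbb S}$ along the tangent directions. Once this equivalence is in hand (as carried out in \cite{HL-18}), the descent from ``almost all'' to ``Zariski generic'' via Chevalley, Lemma~\ref{lem:polynomial}, and Theorem~\ref{thm:critical} is essentially formal.
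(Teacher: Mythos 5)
Your overall plan (algebraize the critical-point conditions, use genericity of nonsingular-solution counts for parametric polynomial systems, and pass from $\mathbb C$ to $\mathbb R$ by Zariski density) is close in spirit to the paper, which obtains Theorem~\ref{thm:critical-generic} directly from Lemma~\ref{lem:polynomial} combined with Theorem~\ref{thm:critical}. But the decisive step of your argument has a gap: a real tensor lies in $\pi(D)$ as soon as the system $G(\cdot\,;\mathcal A)=\mathbf 0$ has \emph{some complex} solution with singular Jacobian, whereas Theorem~\ref{thm:critical} only asserts that the set of real tensors possessing a degenerate \emph{real} singular vector tuple has Lebesgue measure zero. So even if $\pi(D)$ were Zariski dense and hence contained a nonempty open set whose real points have full measure, this would not contradict Theorem~\ref{thm:critical}: those real tensors could a priori have only nonreal degenerate solutions of the algebraic system while all of their real singular vector tuples remain nondegenerate. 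The properness of $\overline{\pi(D)}$, which is where the entire content of the theorem sits, is therefore not established; Chevalley's theorem and Lemma~\ref{lem:polynomial} alone do not close this real/complex mismatch.

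The argument can be repaired in at least two ways. Keeping your incidence-variety setup, replace $D$ by its set of real points $D_{\mathbb R}$: by Tarski--Seidenberg its projection onto $\mathbb R^{n_1}\otimes\dots\otimes\mathbb R^{n_k}$ is semialgebraic, by Theorem~\ref{thm:critical} together with the Jacobian characterization of nondegeneracy from \cite{HL-18} it has Lebesgue measure zero, and a semialgebraic set of measure zero has dimension strictly smaller than the ambient dimension, so its real Zariski closure is a proper real variety whose complement is the desired Zariski open set. Alternatively, follow the paper's route and work with Lemma~\ref{lem:polynomial} applied to the parametric system itself, comparing the generic (constant, maximal) number of nonsingular complex solutions with the finite generic total number of solutions, which avoids projecting the degeneracy locus. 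In either route you should also dispose of singular vector tuples with zero singular value, since the equivalence ``nondegenerate if and only if nonsingular solution'' quoted from \cite{HL-18} is stated for nonzero tuples; this is handled by intersecting with the generic set on which all singular values are nonzero (e.g., nonvanishing hyperdeterminant), as remarked after Definition~\ref{def:singular}.
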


Likewise, we have the following result.
\begin{theorem}[Generic Nondegenerate Z-Eigenvectors]\label{thm:z-critical-generic}
For a generic tensor in $\operatorname{S}(\otimes^k\mathbb R^n)$, each of its Z-eigenvector is nondegenerate.
\end{theorem}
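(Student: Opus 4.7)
The plan is to upgrade the almost-all statement of Theorem~\ref{thm:z-critical} to a Zariski-generic one by combining it with Proposition~\ref{prop:sym-manifold} and Lemma~\ref{lem:polynomial}, in direct parallel with the way Theorem~\ref{thm:critical-generic} is obtained from Theorem~\ref{thm:critical}. The first move is algebraic: by Proposition~\ref{prop:sym-manifold}, a nonzero Z-eigenvector $\mathbf x$ of $\mathcal A$ is nondegenerate if and only if $\mathbf x$ is a nonsingular solution of the polynomial system $T(\mathbf x)=\mathbf 0$ from \eqref{eq:z-eigenvector}. Restricting attention to the Zariski-open locus of nonsingular tensors (on which every Z-eigenvector is automatically nonzero, as noted in Section~\ref{sec:z-eigenvector-nsym}), the target statement becomes: for a Zariski-generic $\mathcal A\in\operatorname{S}(\otimes^k\mathbb R^n)$, every nonzero solution of $T(\mathbf x)=\mathbf 0$ is nonsingular.

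Next I would apply Lemma~\ref{lem:polynomial} to the parametric polynomial system $T(\mathbf x)=\mathbf 0$, with parameter $\mathcal A$ ranging over $\operatorname{S}(\otimes^k\mathbb C^n)$. This produces a Zariski-open subset $U\subseteq\operatorname{S}(\otimes^k\mathbb C^n)$ on which the number of nonsingular solutions equals the generic constant $\mathcal N$, with $\mathcal N$ an upper bound for this count everywhere. Because $\mathbb R^d$ is Zariski-dense in $\mathbb C^d$, the real trace $U_{\mathbb R}$ is a nonempty Zariski-open (hence full-measure) subset of $\operatorname{S}(\otimes^k\mathbb R^n)$. Combining this with Theorem~\ref{thm:z-critical}, the full-measure set of real symmetric tensors whose Z-eigenvectors are all nondegenerate must meet $U_{\mathbb R}$, so we may fix a witness $\mathcal A_0$ in the intersection. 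At $\mathcal A_0$, Proposition~\ref{prop:sym-manifold} guarantees that every nonzero solution of $T(\mathbf x)=\mathbf 0$ is nonsingular, so the total count and nonsingular count of nonzero solutions both equal $\mathcal N$.

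The final step is to use this single witness to control the total count on a Zariski-open set, and this is where I expect the main obstacle to lie: Lemma~\ref{lem:polynomial} controls only the nonsingular count, and a priori a Zariski-generic parameter could acquire extra, singular nonzero solutions of $T(\mathbf x)=\mathbf 0$. I would dispatch this by an upper-semicontinuity/properness argument on the incidence variety $\{(\mathcal A,\mathbf x):T(\mathbf x)=\mathbf 0,\ \mathbf x\neq\mathbf 0,\ \det\nabla_{\mathbf x}T(\mathbf x)=0\}\subseteq\operatorname{S}(\otimes^k\mathbb C^n)\times\mathbb C^n$; its image in $\operatorname{S}(\otimes^k\mathbb C^n)$ is a constructible set whose complement contains $\mathcal A_0$, and a Bezout-type count (the existence of $\mathcal A_0$ with $\mathcal N$ simple solutions fixes the generic total solution count to be $\mathcal N$ as well) forces this image to lie in a proper Zariski-closed subset. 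Outside that subset, the nonsingular count $\mathcal N$ exhausts all nonzero solutions of $T(\mathbf x)=\mathbf 0$, and Proposition~\ref{prop:sym-manifold} then yields the nondegeneracy of every Z-eigenvector, completing the proof.
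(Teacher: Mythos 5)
Your reduction steps are fine (restricting to nonsingular tensors so that all Z-eigenvectors are nonzero, and invoking Proposition~\ref{prop:sym-manifold} to translate nondegeneracy into nonsingularity of solutions of \eqref{eq:z-eigenvector}), and applying Lemma~\ref{lem:polynomial} with the symmetric tensor as parameter is exactly the spirit of the paper, which itself only asserts that Lemma~\ref{lem:polynomial} together with Theorem~\ref{thm:z-critical} yields the result. The genuine gap is in how you use the witness $\mathcal A_0$. Theorem~\ref{thm:z-critical} and Proposition~\ref{prop:sym-manifold} control only the \emph{real, unit-norm} solutions of $T(\mathbf x)=\mathbf 0$: they say every Z-eigenvector of $\mathcal A_0$ is a nonsingular solution. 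They say nothing about nonzero \emph{complex} solutions of the system at $\mathcal A_0$, which is what your incidence-variety argument needs. So the assertion ``at $\mathcal A_0$ the total count and the nonsingular count of nonzero solutions both equal $\mathcal N$'' is unsupported, and consequently you have not shown that $\mathcal A_0$ lies outside the image of $\{(\mathcal A,\mathbf x): T(\mathbf x)=\mathbf 0,\ \mathbf x\neq\mathbf 0,\ \det\nabla_{\mathbf x}T(\mathbf x)=0\}$.

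Even granting that claim, the closing ``Bezout-type'' step is not justified by Lemma~\ref{lem:polynomial}, which only addresses nonsingular solutions: knowing one parameter value with exactly $\mathcal N$ simple nonzero solutions does not pin the \emph{generic total} count at $\mathcal N$, because as the parameter specializes to $\mathcal A_0$ extra (singular) solutions present at generic parameters may escape to infinity or collapse into the trivial solution $\mathbf x=\mathbf 0$ (the system is affine and not homogeneous, so no properness is available), and a single point in the complement of a constructible image does not prevent that image from being Zariski dense. To close the argument you need an additional ingredient, e.g.\ the generic finiteness and simplicity of E-eigenvectors over $\mathbb C$ (the Cartwright--Sturmfels count \eqref{eq:generic}) to equate the generic total and nonsingular counts, or a coefficient-parameter continuation/properness argument after a suitable compactification; alternatively one can bypass the complex detour entirely by observing that the bad locus of real tensors is semialgebraic (Tarski--Seidenberg) and has Lebesgue measure zero by Theorem~\ref{thm:z-critical}, hence has dimension strictly less than that of $\operatorname{S}(\otimes^k\mathbb R^n)$ and its real Zariski closure is a proper algebraic subset, which is precisely the generic statement.
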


We remark that Theorem~\ref{thm:critical} establishes the nondegeneracy property for all tensors except a set with Lebesgue measure zero. Theorems~\ref{thm:critical-generic} and \ref{thm:z-critical-generic} sharpen it to for all tensors except an algebraic variety with strictly lower dimension than that of the ambient space. The latter set definitely has Lebesgue measure zero. The merit for this refinement also lies in further derivations for determinantal characterizations for tensors having the nondegeneracy property.
\section{Eigenvalues and Eigenvectors}\label{sec:eigenvector}
In this section, we consider the eigenvalues and eigenvectors introduced by Qi \cite{Q-05}.
Given a vector $\mathbf x\in\mathbb C^n$, we denote by $\mathbf x^{[k-1]}$ a vector in $\mathbb C^n$ with the components being
$x_i^{k-1}$ for all $i\in\{1,\dots,n\}$.
\begin{definition}\label{def:eigenvalue}
Given a tensor $\mathcal A\in\otimes^k\mathbb C^n$, if a nonzero vector $\mathbf x$ together with a number $\lambda\in\mathbb C$ satisfies the following equations
\begin{equation}\label{eq:eigenvalue}
\mathcal A\mathbf x^{k-1}=\lambda\mathbf x^{[k-1]},
\end{equation}
then $\lambda$ is an eigenvalue of $\mathcal A$ and $\mathbf x$ a corresponding eigenvector of $\mathcal A$.
\end{definition}

By the theory of determinant of tensors, the number of eigenvalues (counted with multiplicities) is equal to $n(k-1)^{n-1}$ for any given tensor $\mathcal A\in \otimes^k\mathbb C^n$ \cite{HHLQ-13}. Let $\sigma(\mathcal A)$ be the set of all eigenvalues of $\mathcal A$. The {set of eigenvalues of a tensor has} a beautiful symmetric structure in certain cases, which is connected with the underlying zero components pattern of the tensor \cite{H-20}.
However, for a given eigenvalue $\lambda\in\sigma(\mathcal A)$, the set of {the corresponding eigenvectors $V(\lambda)$ (adding the zero vector)} is not a linear subspace of $\mathbb C^n$ any more. It is an \textit{eigenvariety} \cite{HY-16}. In general, the eigenvariety is rather complicated. While, for a generic tensor, it is much clearer. The following result is \cite[Lemma~6.1]{HY-16}.

\begin{lemma}[Unique Eigenvector]\label{lem:unique-eigenvector}
Let tensor $\mathcal A\in\otimes^k\mathbb C^n$ be generic. Then
$V(\lambda)$ has dimension one and is irreducible for all $\lambda\in\sigma(\mathcal A)$, i.e., $\mathcal A$ has a unique (up to scaling) eigenvector for every $\lambda\in\sigma(\mathcal A)$.
\end{lemma}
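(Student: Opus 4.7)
The plan is to match two generic counts --- the number of distinct eigenvalues of $\mathcal{A}$ and the number of its projective eigenvectors --- and to show both equal $n(k-1)^{n-1}$. Once this is in place, each eigenvalue receives exactly one projective eigenvector, so the cone $V(\lambda)\subset\mathbb{C}^n$ is the line through origin it spans: one-dimensional and irreducible, as required.

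First I would dehomogenize the eigenvalue equations $\mathcal{A}\mathbf{x}^{k-1}=\lambda\mathbf{x}^{[k-1]}$ by imposing a generic affine normalization such as $\mathbf{c}^\mathsf{T}\mathbf{x} = 1$, obtaining a square polynomial system $G(\mathbf{x},\lambda;\mathcal{A})=\mathbf{0}$ in $n+1$ unknowns parametrized by $\mathcal{A}\in\otimes^k\mathbb{C}^n$. Lemma~\ref{lem:polynomial} then produces a generic constant $N$ equal to the number of nonsingular affine solutions of $G$. For a generic fixed $\mathbf{c}$, every projective eigenvector of $\mathcal{A}$ meets the chart in exactly one point, so $N$ is also the generic count of nondegenerate projective eigenvectors of $\mathcal{A}$.

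Next I would invoke the polynomial whose roots are the $n(k-1)^{n-1}$ eigenvalues of $\mathcal{A}$ (counted with multiplicity, as recalled in the paragraph preceding the lemma). Its discriminant is a polynomial in the entries of $\mathcal{A}$, not identically zero (verified by exhibiting any single tensor whose eigenvalues separate --- a suitable perturbation of a diagonal tensor will do). Hence on a Zariski-open set the $n(k-1)^{n-1}$ eigenvalues are pairwise distinct, each contributing at least one projective eigenvector, which gives $N\geq n(k-1)^{n-1}$. The reverse inequality follows because any projective eigenvector determines its eigenvalue uniquely (as $(\mathcal{A}\mathbf{x}^{k-1})_i/x_i^{k-1}$ for any $i$ with $x_i\neq 0$), and the multiset of eigenvalues so produced is constrained by the characteristic degree $n(k-1)^{n-1}$. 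On the intersection of the two Zariski-open sets, each eigenvalue admits exactly one projective eigenvector, whose affine cone $V(\lambda)$ is the required one-dimensional irreducible subvariety of $\mathbb{C}^n$.

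The main obstacle is the reverse inequality $N\leq n(k-1)^{n-1}$ --- ensuring that distinct projective eigenvectors cannot pile up onto a single eigenvalue in a way that inflates the count past the characteristic degree. One clean route is to realize the incidence variety $\{(\mathcal{A},[\mathbf{x}],\lambda):\mathcal{A}\mathbf{x}^{k-1}=\lambda\mathbf{x}^{[k-1]}\}$ as a bundle over $\otimes^k\mathbb{C}^n$ whose generic fiber degree can be computed directly; alternatively, a Bezout computation on the coincidence locus of the two rational maps $[\mathbf{x}]\mapsto[\mathcal{A}\mathbf{x}^{k-1}]$ and $[\mathbf{x}]\mapsto[\mathbf{x}^{[k-1]}]$ from $\mathbb{P}^{n-1}$ to itself yields the right bound. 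A secondary subtlety is establishing nonvanishing of the characteristic discriminant without explicit case-by-case computation, which can be handled by a deformation argument reducing to small $n$ and $k$.
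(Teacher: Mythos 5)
The paper does not actually prove this lemma --- it is quoted verbatim from \cite{HY-16} --- so your outline has to stand on its own, and as written it has a genuine gap at its linchpin: the generic simplicity of the $n(k-1)^{n-1}$ eigenvalues. Nonvanishing of the discriminant of the characteristic polynomial is a statement of essentially the same depth as the lemma itself, and your proposed certificate does not work: a diagonal tensor with distinct diagonal entries $a_1,\dots,a_n$ has \emph{only} the eigenvalues $a_1,\dots,a_n$, each of algebraic multiplicity $(k-1)^{n-1}$, so the diagonal locus sits deep inside the discriminant hypersurface, and the assertion that ``a suitable perturbation of a diagonal tensor will do'' is precisely the claim you are supposed to prove (that the discriminant is not identically zero near that locus); the ``deformation argument reducing to small $n$ and $k$'' is likewise not an argument. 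Without an explicit tensor exhibiting $n(k-1)^{n-1}$ distinct eigenvalues, or an irreducibility/monodromy argument on the incidence variety $\{(\mathcal A,\lambda,[\mathbf x])\}$, the pigeonhole step has nothing to rest on. (For reference, \cite{HY-16} obtains the lemma from an analysis of the eigenvariety and multiplicities, not by assuming simple eigenvalues.)

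The upper bound is also shakier than you present it. Your first justification --- that ``the multiset of eigenvalues so produced is constrained by the characteristic degree'' --- tacitly assumes the number of projective eigenvectors attached to a given $\lambda$ is controlled by the multiplicity of $\lambda$ in the characteristic polynomial; this is exactly the algebraic-versus-geometric multiplicity comparison that the paper states (end of Section~\ref{sec:eigenvector}) is open for tensors, so it cannot be invoked. The alternative you mention, computing the degree of the rank-one locus of the $2\times n$ matrix with rows $\mathcal A\mathbf x^{k-1}$ and $\mathbf x^{[k-1]}$ (Thom--Porteous gives $\sum_{i=0}^{n-1}(k-1)^i(k-1)^{n-1-i}=n(k-1)^{n-1}$), is the right tool, but it is only gestured at; it requires first establishing that a generic tensor has finitely many projective eigenvectors (true, e.g.\ via the diagonal example and semicontinuity, but unaddressed), and it must bound \emph{all} eigenvectors, not just the nonsingular solutions counted by $N$ in your appeal to Lemma~\ref{lem:polynomial}. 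Indeed your inequalities conflate the two counts: distinct eigenvalues give at least $n(k-1)^{n-1}$ eigenvectors, but not necessarily nonsingular solutions of the dehomogenized system, so $N\geq n(k-1)^{n-1}$ does not follow as claimed; conversely, a bound on $N$ alone would not exclude additional singular eigenvectors piling onto a single eigenvalue. The detour through Lemma~\ref{lem:polynomial} is in fact unnecessary for your pigeonhole argument, and once it is stripped away what remains is a plausible program whose two hardest steps (generic simplicity of eigenvalues, and the eigenvector count bound) are left open.
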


Lemma~\ref{lem:unique-eigenvector} is also true for generalized tensor eigenvectors, see \cite{FNZ-18}.
\begin{definition}\label{def:nondegenerate}
An eigenvector $\mathbf x$ of $\mathcal A$ is nondegenerate, if at the corresponding eigenvalue the Jacobian matrix of the system \eqref{eq:eigenvalue} has rank $n-1$, or equivalently, it is only singular along the eigenvector $\mathbf x$.
\end{definition}
In the matrix case, we see that an eigenvector is nondegenerate if the corresponding eigenvalue is simple, and a generic matrix has its all eigenvectors being nondegenerate \cite{HJ-85}.

\begin{theorem}\label{thm:nondegenerate}
Let tensor $\mathcal A\in\otimes^k\mathbb C^n$ be generic, then each of its eigenvectors is nondegenerate.
\end{theorem}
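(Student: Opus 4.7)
The plan is to follow the template of Section~\ref{sec:z-eigenvector-nsym}: augment the defining system \eqref{eq:eigenvalue} by a generic affine normalization to obtain a square parametric polynomial system in $(\mathbf{x},\lambda)$, invoke a generic-smoothness argument in the spirit of Lemma~\ref{lem:polynomial} to deduce that, for generic $\mathcal A$, every solution of the augmented system has nonsingular Jacobian, and then translate that algebraic nonsingularity back into the rank-$(n-1)$ condition of Definition~\ref{def:nondegenerate}.

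Concretely, I would fix a generic $\mathbf{c}\in\mathbb{C}^{n}$ and consider
\[
G(\mathbf{x},\lambda;\mathcal A):=\bigl(\mathcal A\mathbf{x}^{k-1}-\lambda\mathbf{x}^{[k-1]},\;\mathbf{c}^{\mathsf T}\mathbf{x}-1\bigr)\in\mathbb{C}^{n+1},
\]
a square system in the $n+1$ unknowns $(\mathbf{x},\lambda)$ with parameter $\mathcal A\in\otimes^{k}\mathbb{C}^{n}$. By Lemma~\ref{lem:unique-eigenvector} a generic tensor has only finitely many eigenvectors up to scaling, so for generic $\mathbf{c}$ none of them lies on $\mathbf{c}^{\perp}$; this puts the solutions of $G=\mathbf{0}$ in bijection with the eigenpairs of $\mathcal A$, and because the $\mathbf{x}$-Jacobian $J_{\mathbf{x}}$ of \eqref{eq:eigenvalue} obeys $J_{\mathbf{x}}(t\mathbf{x},\lambda)=t^{k-2}J_{\mathbf{x}}(\mathbf{x},\lambda)$ under the eigenvalue-preserving rescaling $\mathbf{x}\mapsto t\mathbf{x}$, nondegeneracy is insensitive to the choice of affine representative. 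It therefore suffices to prove nondegeneracy at every solution of $G=\mathbf{0}$ for generic $\mathcal A$. I would achieve this by first showing that the incidence variety $W:=\{(\mathbf{x},\lambda,\mathcal A):G=\mathbf{0}\}$ is smooth, via the rank computation that $\nabla_{(\mathbf{x},\lambda,\mathcal A)}G$ has maximal row rank $n+1$ at every point of $W$: for each $i\leq n$ the partial $\partial G_{i}/\partial a_{i j_{2}\cdots j_{k}}=x_{j_{2}}\cdots x_{j_{k}}$ can be made nonzero by some choice of indices (since $\mathbf{x}\neq\mathbf{0}$), these entries live in $\mathcal A$-columns disjoint across different $i$, and the normalization row $(\mathbf{c}^{\mathsf T},0,0)$ contributes one further independent row supported entirely in the $\mathbf{x}$-columns. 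Applying generic smoothness in characteristic zero to the dominant projection $\pi:W\to\otimes^{k}\mathbb{C}^{n}$ (dominance follows from the nonzero-determinant genericity cited in \cite{HHLQ-13}) then yields a Zariski-open $U\subset\otimes^{k}\mathbb{C}^{n}$ on which the fibre $\pi^{-1}(\mathcal A)$ is smooth and $0$-dimensional, i.e., $\nabla_{(\mathbf{x},\lambda)}G$ is invertible at every solution.

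To finish, I would use the block decomposition
\[
\nabla_{(\mathbf{x},\lambda)}G=\begin{pmatrix} J_{\mathbf{x}} & -\mathbf{x}^{[k-1]}\\ \mathbf{c}^{\mathsf T} & 0\end{pmatrix}
\]
together with Euler's identity applied to the degree-$(k-1)$ homogeneous map $T(\mathbf{x}):=\mathcal A\mathbf{x}^{k-1}-\lambda\mathbf{x}^{[k-1]}$, which yields $J_{\mathbf{x}}\mathbf{x}=(k-1)T(\mathbf{x})=\mathbf{0}$ at the eigenpair, so $\operatorname{rank}J_{\mathbf{x}}\leq n-1$ automatically. If strict inequality held, one could pick $\mathbf{y}\in\ker J_{\mathbf{x}}$ independent from $\mathbf{x}$ and set $\widetilde{\mathbf{y}}:=\mathbf{y}-(\mathbf{c}^{\mathsf T}\mathbf{y})\mathbf{x}$; the normalization $\mathbf{c}^{\mathsf T}\mathbf{x}=1$ forces $\widetilde{\mathbf{y}}\neq\mathbf{0}$, $\mathbf{c}^{\mathsf T}\widetilde{\mathbf{y}}=0$, and $J_{\mathbf{x}}\widetilde{\mathbf{y}}=\mathbf{0}$, so $(\widetilde{\mathbf{y}},0)$ is a nonzero null vector of $\nabla_{(\mathbf{x},\lambda)}G$, contradicting its nonsingularity. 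Hence $\operatorname{rank}J_{\mathbf{x}}=n-1$, which is exactly the nondegeneracy requirement of Definition~\ref{def:nondegenerate}.

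The principal obstacle is the upgrade in the middle step: Lemma~\ref{lem:polynomial} by itself only guarantees that the count of nonsingular solutions is constant on a Zariski-open set, whereas the theorem requires \emph{every} solution to be nonsingular for generic $\mathcal A$. The decisive ingredient beyond Lemma~\ref{lem:polynomial} is the full-rank computation of $\nabla_{(\mathbf{x},\lambda,\mathcal A)}G$ on $W$, which hinges on the disjointness of the $\mathcal A$-dependencies across distinct eigenvalue equations together with $\mathbf{x}\neq\mathbf{0}$; this is what allows generic smoothness in characteristic zero to promote constancy of the nonsingular count to universality on the fibre.
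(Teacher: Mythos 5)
Your argument is correct in substance but follows a genuinely different route from the paper. The paper's proof is a two-line algebraic-geometry argument: by Lemma~\ref{lem:unique-eigenvector}, for generic $\mathcal A$ the eigenvariety $V(\lambda)$ is the single line $\mathbb C\mathbf x$, and the corank of the Jacobian of \eqref{eq:eigenvalue} at a point of this one-dimensional irreducible smooth variety is $1$, i.e., the rank is $n-1$ as required by Definition~\ref{def:nondegenerate}. You instead adjoin the affine normalization $\mathbf c^{\mathsf T}\mathbf x=1$ to get a square parametric system, prove the incidence variety is smooth via the disjoint-support rank computation in the $\mathcal A$-directions, apply generic smoothness of the projection to tensor space to obtain invertibility of the bordered Jacobian at \emph{every} solution for generic $\mathcal A$, and then use Euler's identity plus the bordering trick to convert that into $\operatorname{rank}J_{\mathbf x}=n-1$. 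Your route is longer but self-contained: it does not rely on the uniqueness/irreducibility result of \cite{HY-16}, it makes explicit the point that the paper's appeal to ``dimension equals corank'' quietly assumes (corank equals dimension only at smooth points of the scheme cut out by \eqref{eq:eigenvalue}, which is essentially the assertion being proved), and it correctly identifies why Lemma~\ref{lem:polynomial} alone is insufficient and supplies the missing transversality ingredient. One wrinkle to repair: you fix $\mathbf c$ before $\mathcal A$, yet justify ``no eigenvector lies on $\mathbf c^{\perp}$'' by choosing $\mathbf c$ after a generic $\mathcal A$. With $\mathbf c$ fixed, argue instead via the incidence set $\{([\mathbf x],\mathcal A)\colon \mathcal A\mathbf x^{k-1}\wedge\mathbf x^{[k-1]}=\mathbf 0,\ \mathbf c^{\mathsf T}\mathbf x=0\}$, whose projection to tensor space is closed (the $\mathbf x$-factor is projective) and of codimension at least one by a fiber-dimension count, so a generic $\mathcal A$ has no eigenvector in the fixed hyperplane; alternatively take the pair $(\mathcal A,\mathbf c)$ generic and use your observation that nondegeneracy is insensitive to the choice of representative. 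With that fix the proof goes through.
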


\begin{proof}
Let $\mathcal A\in\otimes^k\mathbb C^n$ be generic and $(\lambda,\mathbf x)$ an eigenpair of $\mathcal A$. Then by Lemma~\ref{lem:unique-eigenvector}, we have
\[
V(\lambda)=\{\mathbf y\in\mathbb C^n\colon\mathcal A\mathbf y^{k-1}=\lambda\mathbf y^{[k-1]}\}=\mathbb C\mathbf x.
\]
Being an irreducible smooth variety (actually a point in the projective space or a line in the affine space), the dimension of the eigenvariety $V(\lambda)$ is one, which is equal to the corank of the Jacobian matrix of the defining equations \eqref{eq:eigenvalue} \cite{H-77}.  Consequently, the conclusion follows.
\end{proof}

Motivated by the case for matrices, we conjecture that if the \textit{algebraic multiplicity} of an eigenvalue $\lambda\in\sigma(\mathcal A)$ is one, then a corresponding eigenvector is nondegenerate. While, this is open at present because there lacks an analogue inequality between the algebraic multiplicity and \textit{geometric multiplicity} of an eigenvalue for a tensor as that for a matrix at present. We refer to \cite{HY-16} and references herein for more details.
\section{Conclusions}\label{sec:final}
This short article addressed an interesting as well as important issue for eigenvectors and singular vector tuples of tensors--the nondegeneracy. The nondegeneracy is a foundation of second order analysis for systems of equations \cite{B-99,OR-70}. Compared with the research in the literature \cite{L-05,Q-05,HHLQ-13,HQ-14}, this study moves a further step towards to the second order information of eigenvectors and singular vector tuples of tensors. It is a continuation of the research in \cite{HL-18}. It is certified in \cite{HL-18} that the second order information can be very helpful in furnishing the convergence rate analysis of algorithms for computing them, which is further strengthened in \cite{HY-19} very recently. We hope that the current article can be helpful for designing and analyzing algorithms for computing eigenvectors and singular vector tuples of tensors in the future. Finally, for the purpose of computing eigenvectors of tensors, nonsmooth analysis for the underlying system plays a fundamental role \cite{FP-03}. Actually, it is established in \cite{LQY-13} several interesting semismooth properties for the maximum eigenvalue function for a symmetric tensor. Intrinsic connections between the strong semismoothness of the eigenvalue function and the nondegeneracy for the corresponding eigenvector should be investigated in a forthcoming study.

\section*{Acknowledgements}
This work is partially supported by
National Science Foundation of China (Grant No. 11771328), Young Elite Scientists Sponsorship Program by Tianjin, and the Natural Science Foundation of Zhejiang Province, China (Grant No. LD19A010002).  The author is grateful to Professor Donghui Li at South China Normal University for suggestions.



\end{document}